\documentclass[11pt]{amsart}
\usepackage[T1]{fontenc}
\usepackage[utf8]{inputenc}
\usepackage{lmodern}
\usepackage{microtype}
\usepackage{xcolor}
\usepackage{amsmath,amssymb,amsthm,mathtools}
\usepackage{enumitem}
\usepackage[margin=1in]{geometry}
\usepackage{hyperref}
\hypersetup{colorlinks=true,linkcolor=blue,citecolor=blue,urlcolor=blue}
\usepackage{mathrsfs}

\theoremstyle{plain}
\newtheorem{theorem}{Theorem}[section]
\newtheorem{lemma}[theorem]{Lemma}
\newtheorem{proposition}[theorem]{Proposition}
\newtheorem{corollary}[theorem]{Corollary}
\theoremstyle{remark}
\newtheorem{remark}[theorem]{Remark}
\newtheorem{example}[theorem]{Example}

\DeclareMathOperator{\rank}{rank}
\DeclareMathOperator{\diag}{diag}
\newcommand{\C}{\mathbb{C}}
\numberwithin{equation}{section}

\newcommand{\N}{\mathbb{N}}
\newcommand{\A}{\mathcal{A}}
\newcommand{\B}{\mathcal{B}}
\newcommand{\J}{\mathcal{J}}
\newcommand{\K}{\mathcal{K}}
\newcommand{\Pol}{\mathscr{P}}
\DeclareMathOperator{\spc}{sp}
\DeclareMathOperator{\Poly}{Pol}
\DeclareMathOperator{\Nil}{Nil}
\DeclareMathOperator{\rad}{rad}
\DeclareMathOperator{\disc}{disc}

\hypersetup{pdftitle={Continuous spectrum-shrinking maps between finite-dimensional power-associative algebras},pdfauthor={Ilja Gogic},pdfsubject={Continuous spectrum-shrinking maps between power-associative algebras}}

\title[Spectrum-shrinking maps between power-associative algebras]{Continuous spectrum-shrinking maps between finite-dimensional power-associative algebras}

\author{Ilja Gogi\'{c}}

\address{I.~Gogi\'c, University of Zagreb Faculty of Science, Department of Mathematics, Bijeni\v{c}ka 30, 10000 Zagreb, Croatia}

\email{ilja@math.hr, ilja.gogic@math.pmf.unizg.hr}

\date{\today}

\subjclass[2020]{Primary 47A10; Secondary 17A05, 17C20, 15A18, 47B49.}

\keywords{continuous spectrum-shrinking map, spectrum-preserving map, power-associative algebra, Jordan algebra, nilradical, generic minimal polynomial, spectral selector}

\begin{document}

\begin{abstract}
For finite-dimensional complex power-associative algebras $\mathcal{A}$ and $\mathcal{B}$, each equipped with its unique Hausdorff vector topology, we characterize continuous spectrum-shrinking maps $\phi:\mathcal{A}\to\mathcal{B}$, i.e.\ maps satisfying $\mathrm{sp}_{\mathcal{B}}(\phi(x)) \subseteq\mathrm{sp}_{\mathcal{A}}(x)$ for all $x\in\mathcal{A}$. The spectrum of an element is the zero set of its minimal polynomial, computed in the unitization for nonunital algebras. No such map exists when $\mathcal{A}$ is unital and $\mathcal{B}$ is nonunital. Otherwise, we symmetrize both products by setting $x\circ y:=\tfrac{1}{2}(xy+yx)$ and factor out the nilradicals to obtain semisimple Jordan quotients. We prove that a continuous map $\phi$ is spectrum-shrinking precisely when, for every $x\in\mathcal{A}$, Jacobson's generic minimal polynomials of the simple quotient coordinates of $\phi(x)$ are products of fixed nonnegative integer powers of those of $x$, with an additional fixed monomial factor allowed when $\mathcal{A}$ is nonunital. Comparing degrees gives linear Diophantine equations characterizing existence, and every nonnegative integer solution is realized by a polynomial spectrum-shrinking map. Spectrum preservation, meaning equality of spectra, is equivalent to additional positivity conditions. We also determine when every continuous spectrum-shrinking map preserves spectra and show that every continuous map $f:\mathcal{A}\to\mathbb{C}$ with $f(x)\in\mathrm{sp}_{\mathcal{A}}(x)$ for all $x\in\mathcal{A}$ is $\mathbb{C}$-linear and satisfies $f(x\circ y)=f(x)f(y)$ for all $x,y\in\mathcal{A}$.
\end{abstract}

\maketitle

\section{Introduction}

Spectral conditions play a central role in preserver problems on matrix and operator algebras. For a unital finite-dimensional complex associative algebra $\A$ with identity $1_\A$, the spectrum of $x\in\A$ is
\begin{equation}\label{eq:spc_assoc}
 \spc_\A(x):=\{\lambda\in\C:\lambda 1_\A-x\text{ is not invertible in }\A\}.
\end{equation}
Given two such algebras $\A$ and $\B$, a map $\phi:\A\to\B$ is called \emph{spectrum-shrinking} if
\begin{equation}\label{eq:spectrum-shrinking}
 \spc_\B(\phi(x))\subseteq\spc_\A(x),\qquad x\in\A,
\end{equation}
and \emph{spectrum-preserving} if equality holds in~\eqref{eq:spectrum-shrinking} for every $x\in\A$. For unital linear maps, these conditions are equivalent to preservation of invertibility in one direction and in both directions, respectively.

Unital Jordan homomorphisms provide basic examples of spectrum-shrinking maps. Recall that every complex associative algebra $\A$ becomes a Jordan algebra $\A^+$ under the \emph{symmetrized product}
\begin{equation}\label{eq:symmetrized-product}
 x\circ y:=\frac{1}{2}(xy+yx),\qquad x,y\in\A;
\end{equation}
see~\cite[Chapter~I]{Schafer1966}. A \emph{Jordan homomorphism} between complex associative algebras $\A$ and $\B$ is a linear map $\phi:\A\to\B$ that preserves squares or, equivalently, a homomorphism $\A^+\to\B^+$. Conversely, building on Frobenius's classification of linear determinant preservers~\cite{Frobenius1897}, Marcus and Purves~\cite[Theorem~2.1]{MarcusPurves1959} proved that, for each positive integer $n$, every unital linear spectrum-shrinking self-map of the algebra $M_n(\C)$ of all $n\times n$ complex matrices is a Jordan automorphism.

These results lead to the Kaplansky--Aupetit invertibility-preserver problem~\cite{Aupetit2000,Kaplansky1970}: must every surjective unital linear spectrum-shrinking map between unital semisimple complex Banach algebras be a Jordan homomorphism? Sourour proved that unital linear spectrum-shrinking bijections between the algebras of all bounded linear operators on complex Banach spaces are Jordan isomorphisms~\cite[Theorem~1.1]{Sourour1996}. Aupetit obtained the same conclusion for surjective linear spectrum-preserving maps between von Neumann algebras~\cite[Theorem~1.3]{Aupetit2000}. Cui and Hou subsequently settled the spectrum-shrinking question for unital linear maps from a von Neumann algebra onto an arbitrary unital semisimple Banach algebra~\cite{CuiHou2004}. The problem remains open for general unital $C^*$-algebras~\cite[Introduction]{MathieuSchulz2023}. In finite dimensions, the theorem of Mathieu and Schulz~\cite[Theorem]{MathieuSchulz2023} gives an affirmative answer without requiring the domain to be semisimple: if $\A$ and $\B$ are unital finite-dimensional complex associative algebras with $\B$ semisimple, then every surjective unital linear spectrum-shrinking map $\A\to\B$ is a Jordan homomorphism. Here $\B$ is isomorphic to a finite direct sum of full matrix algebras, in particular to a $C^*$-algebra with a faithful tracial state.

Spectrum preservation has also been studied for nonlinear maps on the matrix algebra $M_n(\C)$. Baribeau and Ransford proved that, for $n\ge2$, every spectrum-preserving $C^1$-diffeomorphism between open subsets of $M_n(\C)$ maps each matrix to a similar one~\cite[Corollary~1.2]{BaribeauRansford2000}. Costara~\cite[Theorem~1]{Costara2011} proved that, for $n\ge2$, every continuous map $\phi:M_n(\C)\to M_n(\C)$ satisfying $\phi(0)=0$ and $\spc_{M_n(\C)}(\phi(A)-\phi(B))\subseteq\spc_{M_n(\C)}(A-B)$ for all $A,B\in M_n(\C)$ is a Jordan automorphism. Dolinar, Hou, Kuzma and Qi removed the continuity assumption~\cite[Theorem~1.1]{DHKQ2013}. For $n\ge3$, \v{S}emrl characterized the Jordan automorphisms of $M_n(\C)$ as its continuous self-maps preserving spectrum and commutativity~\cite[Theorem~1.1]{Semrl2008}. Recently, Chirvasitu studied continuous maps preserving spectrum and commutativity on special unitary groups, normal matrices with spectra constrained to curves, and semisimple matrices~\cite{Chirvasitu2025b,Chirvasitu2026a,Chirvasitu2026b}, as well as continuous eigenvalue selection for representations of compact connected groups~\cite{Chirvasitu2025a}.

Moreover, continuity alone imposes strong restrictions on spectrum-shrinking maps. In joint work with Chirvasitu and Toma\v{s}evi\'c, we proved that, for positive integers $m$ and $n$,
a continuous spectrum-shrinking map $\phi:M_n(\C)\to M_m(\C)$ exists if and only if $n$ divides $m$, in which case the characteristic polynomials satisfy
\begin{equation}\label{eq:matrix-multiplicities}
 \det(tI_m-\phi(A))=\det(tI_n-A)^{m/n},
 \qquad A\in M_n(\C),
\end{equation}
where $t$ is an indeterminate and $I_m$ and $I_n$ are the corresponding identity matrices.
In particular, every such map is spectrum-preserving. The same conclusions hold when the domain is the general linear, special linear or unitary group, the set of normal matrices, or the set of diagonalizable matrices in $M_n(\C)$ or in either of the first two groups~\cite[Theorem~1.1 and Corollary~2.2]{CGT2026}. For positive integers $n$ and $k$ with $n-\sqrt n<k<n$, we also proved that every injective continuous spectrum-shrinking map from the matrices of rank at most $k$ in $M_n(\C)$ into $M_n(\C)$ preserves characteristic polynomials~\cite[Theorem~1.2(4)]{CGT2025}.
Here injectivity is essential, as the zero map shows.

In subsequent work with Toma\v{s}evi\'c~\cite{GT2025}, we characterized the existence of continuous spectrum-shrinking and spectrum-preserving maps between unital finite-dimensional complex associative algebras in terms of nonnegative integer solutions of a linear Diophantine system determined by the sizes of their simple matrix quotients. We also showed that if every continuous spectrum-shrinking map is spectrum-preserving, then every such solution assigns a positive multiplicity to each simple source quotient in at least one target quotient.
We proved the converse for sources isomorphic to structural matrix algebras, that is, subalgebras of full matrix algebras containing all diagonal matrices. The general case remained open~\cite[Theorem~1.1(c) and the subsequent discussion]{GT2025}.

In this paper we extend this analysis to arbitrary finite-dimensional complex power-associative algebras. Recall that an algebra is \emph{power-associative} if each element generates an associative subalgebra~\cite[p.~552]{Albert1948}, so powers of an element and polynomial evaluation at it are unambiguous. This class includes Jordan and alternative algebras and, more generally, all noncommutative Jordan algebras~\cite[Chapter~V, \S\,3]{Schafer1966}. For such an algebra $\A$, the spectrum $\spc_\A(x)$ of $x\in\A$ is defined as the zero set of its minimal polynomial, computed in $\A$ when $\A$ is unital and in its unitization otherwise. For unital associative and Jordan algebras, this agrees with the usual spectrum; see Section~\ref{subsec:calculus}. We use the same definitions of spectrum-shrinking and spectrum-preserving maps in this setting.

Symmetrization preserves powers and spectra, so we first replace $\A$ by $\A^+$, with the
product~\eqref{eq:symmetrized-product}. This algebra is commutative and power-associative, but need not be a Jordan algebra~\cite[p.~504]{Albert1950}. We therefore pass to the quotient $\widehat\A$ of $\A^+$ by its nilradical, the largest ideal consisting of nilpotent elements. This ideal may be nonzero even when $\A$ is simple, as shown in Example~\ref{ex:symmetrization}. By results of Albert~\cite[Theorem~8]{Albert1950} and Kokoris~\cite[Theorem]{Kokoris1956}, $\widehat\A$ is a semisimple Jordan algebra and hence a finite direct sum of simple Jordan algebras.

Each simple Jordan algebra carries Jacobson's generic minimal polynomial~\cite{Jacobson1963}. Its degree is the Jordan rank of the algebra, and evaluating its coefficients at an element gives a polynomial whose roots form the spectrum of that element; see Subsection~\ref{subsec:generic}. Lemma~\ref{lem:reduction} shows that the spectrum of $x\in\A$ is the union of the spectra of the simple coordinates of its image in $\widehat\A$, with $0$ adjoined when $\A$ is nonunital. For $x\in\A$ and $y\in\B$, we call the generic minimal polynomials of their simple quotient coordinates the \emph{source factors} at $x$ and the \emph{target factors} at $y$, respectively.

Our main result, Theorem~\ref{thm:classification}, characterizes continuous spectrum-shrinking maps by a product formula for their target factors. No spectrum-shrinking map exists from a unital source to a nonunital target. Otherwise, a continuous map $\phi:\A\to\B$ is spectrum-shrinking if and only if, for every $x\in\A$, each target factor at $\phi(x)$ is a product of source factors at $x$ with fixed nonnegative integer exponents, together with a fixed nonnegative power of the polynomial indeterminate $t$ when $\A$ is nonunital. Comparing degrees gives a system of linear Diophantine equations involving the Jordan ranks. Such a map exists exactly when this system has a solution in nonnegative integers, and every such solution is realized by a polynomial spectrum-shrinking map. Spectrum preservation holds precisely when every source factor occurs with positive exponent in at least one target product, with the same requirement for the additional factor $t$ when $\A$ is nonunital and $\B$ is unital. For unital associative algebras, these equations coincide with those of~\cite[Theorem~1.1]{GT2025}.

The classification also determines when every continuous spectrum-shrinking map preserves spectra (Corollaries~\ref{cor:automatic} and~\ref{cor:nonunital}). In particular, Corollary~\ref{cor:associative} proves the converse of~\cite[Theorem~1.1(c)]{GT2025} for arbitrary unital associative sources. For a nonunital source, every spectrum-shrinking map preserves spectra if and only if every element of the source is nilpotent. For $\B=\C$, Corollary~\ref{cor:selectors} classifies the continuous maps $f:\A\to\C$ satisfying $f(x)\in\spc_\A(x)$ for every $x\in\A$. These are exactly the nonzero linear multiplicative maps $\A^+\to\C$, together with the zero map when $\A$ is nonunital. When $\A$ is unital and associative, this is a nonlinear counterpart of the finite-dimensional Gleason--Kahane--\.Zelazko theorem~\cite{Gleason1967,KahaneZelazko1968,Zelazko1968}: continuity and pointwise spectral selection force linearity and multiplicativity. In the same setting, Kowalski and S\l{}odkowski~\cite{KowalskiSlodkowski1980} obtain the same conclusion from $f(0)=0$ and $f(x)-f(y)\in\spc_\A(x-y)$ for all $x,y\in\A$, without assuming continuity.

The proof of Theorem~\ref{thm:classification} combines polynomial sections of the generic minimal polynomial maps (Proposition~\ref{prop:sections}) with a permutation argument adapted from~\cite[proof of Theorem~2.1]{CGT2026}. Together with linear right inverses of the quotient maps, these sections produce elements of $\A$ and $\B$ with prescribed source and target factors, respectively. The source sections parametrize a family of elements of $\A$ by ordered tuples of pairwise distinct source roots, required to be nonzero when $\A$ is nonunital. On this connected parameter space, continuity makes the multiplicity of each root in each target factor constant. Permuting the roots within a source factor leaves the corresponding element of $\A$ unchanged, so these roots have equal multiplicities in each target factor. Rouch\'e's theorem and connectedness then extend the resulting product formula to the connected dense open subset of $\A$ where the product of the source factors, with an additional factor $t$ when $\A$ is nonunital, has only simple roots. Continuity gives the formula on all of $\A$ (Lemma~\ref{lem:multiplicities}).

The paper is organized as follows. Section~\ref{sec:preliminaries} develops the spectral and
Jordan-algebraic background and proves the spectral reduction. Section~\ref{sec:main} constructs polynomial sections and proves the main theorem. Section~\ref{sec:consequences} gives criteria for automatic spectrum preservation, classifies continuous scalar spectral selectors and specializes the results to associative algebras. It concludes with examples illustrating the roles of symmetrization, continuity and the nilradical.

\section{Preliminaries}\label{sec:preliminaries}

Throughout, $\A$, $\B$ and all other algebras whose spectra we consider are finite-dimensional complex power-associative algebras.

An algebra is \emph{unital} if it has a nonzero two-sided identity. In particular, the zero algebra is nonunital. We write $1$ for the identity when the algebra is clear from the context and use a subscript otherwise. All linear maps and homomorphisms between complex vector spaces and algebras are understood to be complex-linear. A homomorphism between unital algebras is \emph{unital} if it preserves their identities. A \emph{character} is a nonzero homomorphism into $\C$.

By an ideal we always mean a two-sided ideal. An element $x\in\A$ is \emph{nilpotent} if $x^k=0$ for some positive integer $k$. An algebra or ideal is \emph{nil} if all its elements are nilpotent. An algebra is \emph{simple} if its product is not identically zero and its only ideals are $\{0\}$ and the algebra itself.

A \emph{Jordan algebra} is an algebra $\J$ whose multiplication $\circ$ is commutative and satisfies the \emph{Jordan identity}
\[
 x^2\circ(x\circ y)=x\circ(x^2\circ y), \qquad x,y\in\J.
\]
Every Jordan algebra is power-associative~\cite[Part~II, Theorem~5.2.2(1)]{McCrimmon2004}. A homomorphism between Jordan algebras is called a \emph{Jordan homomorphism}. Standard references on Jordan algebras are~\cite{Jacobson1968,McCrimmon2004}.

We write $\N$ and $\N_0$ for the sets of positive and nonnegative integers, respectively. By $\C[t]$ we denote the ring of complex polynomials in the indeterminate $t$. For $n\in\N$, let
\[
 \Pol_n:= 
 \left\{  t^n+\sum_{\ell=0}^{n-1}\alpha_\ell t^\ell: \alpha_0,\ldots,\alpha_{n-1}\in\C \right\}
\]
be the affine space of monic polynomials of degree $n$. We identify $\Pol_n$ with $\C^n$ through the coefficient vector $(\alpha_0,\ldots,\alpha_{n-1})$. For a nonzero polynomial $Q\in\C[t]$, we denote its zero set by
\[
 Z(Q):=\{\lambda\in\C:Q(\lambda)=0\}.
\]
Every finite-dimensional complex vector space is equipped with its unique Hausdorff vector topology. A map $F:V\to W$ between such spaces is \emph{polynomial} if its coordinate functions with respect to some (equivalently, any) bases of $V$ and $W$ are polynomials in the coordinates on $V$. We denote by $\Poly(V)$ the algebra of polynomial functions $V\to\C$.

\subsection{Minimal polynomials and spectra}\label{subsec:calculus}

We define the spectrum of an element through its minimal polynomial and show that it agrees with the usual spectrum for unital associative and Jordan algebras.

Set $\A^1:=\A$ if $\A$ is unital. Otherwise, let $\A^1:=\A\oplus\C 1$ be its \emph{(external) unitization}~\cite[p.~11]{Schafer1966}, with product
\[
 (x+\alpha1)(y+\beta 1):=xy+\alpha y+\beta x+\alpha\beta 1,
 \qquad x,y\in\A,\quad \alpha,\beta\in\C.
\]
For each $x\in\A$, adjoining $1$ to the associative subalgebra of $\A$ generated by $x$ gives an associative subalgebra of $\A^1$. Every element of $\A^1$ lies in one of these subalgebras, so $\A^1$ is power-associative.

For $x\in\A$, we put $x^0:=1\in\A^1$ and $x^{k+1}:=x^kx$ for $k\in\N_0$. By power-associativity, evaluation at $x$, with constants interpreted as scalar multiples of $1\in\A^1$, is a homomorphism of $\C[t]$ onto the unital commutative associative subalgebra
\[
 \C[x]:=\{f(x):f\in\C[t]\}\subseteq\A^1.
\]
Since $\A^1$ is finite-dimensional and $1\ne0$, the kernel is a nonzero proper ideal of $\C[t]$. Its unique monic generator $\mu_x\in \C[t]$ has positive degree and is called the \emph{minimal polynomial} of $x$~\cite[\S\,1]{Goldberg2007}. Evaluation induces an isomorphism
\[
 \C[t]/(\mu_x)\cong\C[x],
\]
where $(\mu_x)$ denotes the ideal generated by $\mu_x$. We define the \emph{spectrum} of $x\in\A$ by
\begin{equation}\label{eq:spc_powerassoc}
 \spc_\A(x):=\{\lambda\in\C:\mu_x(\lambda)=0\}.
\end{equation}
Every spectrum is therefore nonempty and finite. If $\A$ is nonunital, the scalar projection $\A^1\to\C$, $y+\alpha1\mapsto\alpha$ for $y\in\A$ and $\alpha\in\C$, is a unital homomorphism; applying it to $\mu_x(x)=0$ gives $\mu_x(0)=0$, so
\begin{equation}\label{eq:zero-in-spectrum}
 0\in\spc_\A(x),\qquad x\in\A,\qquad \text{whenever $\A$ is nonunital.}
\end{equation}
For $\lambda\in\C$, B\'ezout's identity shows that $\lambda 1-x$ is invertible in $\C[x]$ if and only if $\lambda-t$ and $\mu_x$ are coprime, or equivalently, $\mu_x(\lambda)\ne0$. Thus
\begin{equation}\label{eq:intrinsic-spectrum}
 \spc_\A(x)=\{\lambda\in\C:\lambda 1-x  \text{ is not invertible in }\C[x]\}.
\end{equation}

For a unital associative algebra $\A$, \eqref{eq:intrinsic-spectrum} gives the usual spectrum~\eqref{eq:spc_assoc}. Indeed, if $y\in\A$ is invertible, multiplication by $y$ is an injective linear map of $\C[y]$ into itself. Since $\C[y]$ is finite-dimensional, this map is surjective, so $y^{-1}\in\C[y]$. As $\C[\lambda 1-x]=\C[x]$ for every $x\in\A$ and
$\lambda\in\C$, the element $\lambda 1-x$ is invertible in $\A$
if and only if it is invertible in $\C[x]$.

For a unital Jordan algebra $\J$, the Linear Inverse Lemma~\cite[Part~II, 6.1.7]{McCrimmon2004} states that an element $z\in\J$ is \emph{Jordan invertible} if and only if there exists $y\in\J$ such that
\[
 z\circ y=1,\qquad z^2\circ y=z.
\]
In finite dimensions, Jordan invertibility is equivalent to invertibility in $\C[z]$. Indeed, an inverse in $\C[z]$ satisfies these two identities. Conversely, if $z$ is Jordan invertible, its \emph{quadratic operator} $U_z:\J\to\J$, defined by
\begin{equation}\label{eq:quadratic-operator}
 U_z(w):=2z\circ(z\circ w)-z^2\circ w,\qquad w\in\J,
\end{equation}
is bijective by the Invertibility Criterion~\cite[Part~II, 6.1.2]{McCrimmon2004}. It leaves $\C[z]$ invariant and acts there as multiplication by $z^2$. Its restriction is injective and hence surjective, since $\C[z]$ is finite-dimensional. Thus $z^2\circ v=1$ for some $v\in\C[z]$, and $z\circ v$ is an inverse of $z$ in $\C[z]$. Since $\C[\lambda 1-z]=\C[z]$,
\eqref{eq:intrinsic-spectrum} gives the usual
\emph{Jordan spectrum}:
\[
 \spc_\J(z)=\{\lambda\in\C:\lambda 1-z  \text{ is not Jordan invertible}\}.
\]

\begin{remark}\label{rem:spectrum-conventions}
For a unital Jordan algebra $\J$ and $x\in\J$, the Jordan spectrum $\spc_\J(x)$ need not coincide with the \emph{multiplicative spectrum}~\cite[Definition~2.1]{MarcosVelasco2014}, defined as the usual spectrum of the multiplication operator $L_x:\J\to\J$, where $L_x(y):=x\circ y$ for $y\in\J$. For example, take $\J:=M_2(\C)^+$ and define $X,Y\in\J$ by
\[
 X:=\begin{bmatrix}1&0\\0&3\end{bmatrix},\qquad
 Y:=\begin{bmatrix}0&1\\1&0\end{bmatrix}.
\]
Since $Y\ne0$ and $L_X(Y)=2Y$, the multiplicative spectrum of $X$ contains $2$, whereas $\spc_\J(X)=\{1,3\}$.
\end{remark}

\subsection{The semisimple Jordan quotient}\label{subsec:quotient}

We now construct the semisimple Jordan quotient used in the spectral reduction. The \emph{symmetrization} $\A^+$ is the vector space $\A$ equipped with the commutative product~\eqref{eq:symmetrized-product}. For $x\in\A$ and $k,\ell\in\N$, power-associativity gives $x^k\circ x^\ell=x^{k+\ell}$, so $\A^+$ is power-associative and has the same powers as $\A$.

The algebras $\A$ and $\A^+$ also have the same identity elements. An identity of $\A$ is an identity of $\A^+$. Conversely, suppose that $e\in\A$ is an identity of $\A^+$. Then $e^2=e$ and $ex+xe=2x$ for every $x\in\A$. For $x\in\A$ and $\lambda\in\C$, power-associativity gives
\[
 (e+\lambda x)^2(e+\lambda x)=(e+\lambda x)(e+\lambda x)^2.
\]
Comparing the coefficients of $\lambda$ yields $ex+2xe=2ex+xe$, so $ex=xe=x$. Consequently, $(\A^+)^1=(\A^1)^+$, and polynomial evaluation, minimal polynomials and spectra agree in $\A$ and $\A^+$.

Let $\phi:\A^+\to\B^+$ be a homomorphism. If $\A$ is unital, assume that $\B$ is unital and $\phi(1_\A)=1_\B$. The map $\phi$ preserves positive powers. Since $\mu_x(0)=0$ for $x\in\A$ when $\A$ is nonunital by~\eqref{eq:zero-in-spectrum}, in either case we have
\begin{equation}\label{eq:hom-shrinking}
 \mu_x(\phi(x))=\phi(\mu_x(x))=0,\qquad x\in\A.
\end{equation}
Hence $\mu_{\phi(x)}$ divides $\mu_x$, so $\spc_\B(\phi(x))\subseteq\spc_\A(x)$ for every $x\in\A$. In particular, unital Jordan homomorphisms are spectrum-shrinking.

Every finite-dimensional power-associative algebra has a largest nil ideal. Indeed, suppose first that $\mathcal I$ is a nil ideal of $\A$ and that $\A/\mathcal I$ is nil. For each $x\in\A$, there are $\ell,k\in\N$ such that $x^\ell\in\mathcal I$ and $(x^\ell)^k=0$. Power-associativity gives $x^{\ell k}=0$, so $\A$ is nil.

Since $\A$ is finite-dimensional, choose a nil ideal $\mathcal I$ of $\A$ of maximal dimension. Let $\mathcal I'$ be any nil ideal of $\A$. The quotient $(\mathcal I+\mathcal I')/\mathcal I$ is nil, being a homomorphic image of $\mathcal I'$. The preceding observation, applied to $\mathcal I+\mathcal I'$ and its ideal $\mathcal I$, shows that $\mathcal I+\mathcal I'$ is nil. Maximality of $\dim_\C\mathcal I$ therefore gives $\mathcal I+\mathcal I'=\mathcal I$, so $\mathcal I'\subseteq\mathcal I$. This largest nil ideal is called the \emph{nilradical} of $\A$ and is denoted by $\Nil(\A)$~\cite[p.~130]{Schafer1966}. 

By the same observation, the preimage in $\A$ of any nil ideal of $\A/\Nil(\A)$ is a nil ideal of $\A$ containing $\Nil(\A)$, and hence equals $\Nil(\A)$. Consequently, $\A/\Nil(\A)$ has zero nilradical.

A finite-dimensional complex Jordan algebra is \emph{semisimple} if its nilradical is zero~\cite[p.~96]{Schafer1966}. By a theorem of Albert~\cite[Theorem~8]{Albert1950}, every such algebra is a finite direct sum of simple unital Jordan algebras (see also~\cite[Part~I, \S\,2.13]{McCrimmon2004} and~\cite[pp.~98--99]{Schafer1966}).

Although $\A^+$ need not be a Jordan algebra~\cite[p.~504]{Albert1950}, Kokoris's theorem~\cite[Theorem]{Kokoris1956} states that every finite-dimensional commutative complex power-associative algebra with zero nilradical is a Jordan algebra (see also~\cite[p.~132]{Schafer1966}). Extending the associative reduction of~\cite{GT2025}, we set
\[
 \widehat\A:=\A^+/\Nil(\A^+) \qquad \text{and} \qquad  \widehat\B:=\B^+/\Nil(\B^+).
\]
Both quotients are commutative and power-associative and have zero nilradical, so they are semisimple Jordan algebras.

We denote the corresponding quotient maps by
\[
 \pi_\A:\A\to\widehat\A \qquad \text{and} \qquad  \pi_\B:\B\to\widehat\B.
\]
They are linear and preserve the symmetrized products and all positive powers. As symmetrization preserves powers, $\widehat\A=\{0\}$ if and only if $\A$ is nil. In particular, if $\A$ is unital, then $\widehat\A\ne\{0\}$ and $\pi_\A$ is unital. The same statements hold for $\B$.

We fix decompositions into simple Jordan algebras
\begin{equation}\label{eq:decompositions}
 \widehat\A\cong\bigoplus_{j=1}^{p}\J_j, \qquad
 \widehat\B\cong\bigoplus_{k=1}^{q}\K_k,
\end{equation}
where $p,q\in\N_0$ and an empty direct sum is the zero algebra. We identify $\widehat\A$ and $\widehat\B$ with the respective direct sums in~\eqref{eq:decompositions}. We write
\[
 \pi_{\A,j}:\A\to\J_j,\qquad 1\le j\le p 
 \qquad \text{and} \qquad 
 \pi_{\B,k}:\B\to\K_k,\qquad 1\le k\le q,
\]
for the coordinate maps of $\pi_\A$ and $\pi_\B$ under~\eqref{eq:decompositions}.

\subsection{Generic minimal polynomials}\label{subsec:generic}

We recall the generic minimal polynomial and establish the specialization and uniqueness properties needed for the polynomial sections.

Let $\J$ be a unital finite-dimensional complex Jordan algebra with identity $1_\J$. Over $\C$, the Jordan identity is equivalent to its multilinear polarization, so every scalar extension of $\J$ is again Jordan and hence power-associative. Thus $\J$ is \emph{strictly power-associative}~\cite[\S\,1, p.~26]{Jacobson1963}.

Put $d:=\dim_\C\J$ and fix a basis $\{e_1,\ldots,e_d\}$ of $\J$. Let $\xi_1,\ldots,\xi_d\in\Poly(\J)$ be the corresponding coordinate functions, so that $x=\sum_{s=1}^d\xi_s(x)e_s$ for $x\in\J$. Set $\mathbb K:=\operatorname{Frac}(\Poly(\J))$, the field of rational functions on $\J$. We pass to the scalar extension $\mathbb K\otimes_\C\J$, a finite-dimensional Jordan algebra over $\mathbb K$, and consider the \emph{generic element} of $\J$~\cite[\S\,1, p.~26]{Jacobson1963},
\[
 \mathbf x:=\sum_{s=1}^d\xi_s\otimes e_s
 \in\mathbb K\otimes_\C\J.
\]
Let $\mu_{\mathbf x}\in\mathbb K[t]$ be its minimal polynomial in this algebra and set $n:=\deg\mu_{\mathbf x}$. By Jacobson's construction~\cite[\S\,1, pp.~26--27]{Jacobson1963}, the coefficients of $\mu_{\mathbf x}$ belong to $\Poly(\J)$. For $0\le\ell<n$, let $\gamma_\ell\in\Poly(\J)$ denote the coefficient of $t^\ell$ in $\mu_{\mathbf x}$. Specializing these coefficients at $x\in\J$ gives the \emph{generic minimal polynomial} of $x$:
\[
 \chi_\J(x;t):=t^n+\sum_{\ell=0}^{n-1}\gamma_\ell(x)t^\ell.
\]
Thus $x\mapsto\chi_\J(x;\cdot)$ is a polynomial map $\J\to\Pol_n$.

By bilinearity, for each $k\in\N_0$ there are polynomial functions $p_{sk}\in\Poly(\J)$, $1\le s\le d$, such that
\[
 \mathbf x^k=\sum_{s=1}^d p_{sk}\otimes e_s, \qquad
 x^k=\sum_{s=1}^d p_{sk}(x)e_s, \qquad x\in\J.
\]
Expressing $\mu_{\mathbf x}(\mathbf x)=0$ in the basis $1\otimes e_1,\ldots,1\otimes e_d$ gives polynomial identities on $\J$. Evaluating these identities at $x\in\J$ shows that $\chi_\J(x;\cdot)$ annihilates $x$. Hence
\[
 \mu_x\mid\chi_\J(x;\cdot), \qquad \deg\mu_x\le n,
 \qquad x\in\J.
\]
Since $\deg\mu_{\mathbf x}=n$, the $d\times n$ matrix $[p_{sk}]$, where $1\le s\le d$ and $0\le k<n$, has rank $n$ over $\mathbb K$. For $x\in\J$, the elements $1_\J,x,\ldots,x^{n-1}$ are linearly independent if and only if $\deg\mu_x=n$, since $\deg\mu_x\le n$. Consequently, the set
\[
 \{x\in\J:\deg\mu_x=n\}
\]
is the union of the nonvanishing sets of the $n\times n$ minors of $[p_{sk}]$. It is open by continuity and dense because at least one of these minors is a nonzero polynomial function, whose zero set has empty interior in $\J$. We call $n$ the \emph{Jordan rank} (or \emph{degree}) of $\J$
and write
\[
 \rank\J:=n=\max\{\deg\mu_x:x\in\J\}.
\]
Moreover, the map $x\mapsto\chi_\J(x;\cdot)$ is the unique continuous map $\J\to\Pol_n$ whose value at each $x\in\J$ annihilates $x$. Indeed, let $F:\J\to\Pol_n$ be another such map. For $x\in\J$ with $\deg\mu_x=n$, the polynomials $F(x)$ and $\chi_\J(x;\cdot)$ are monic of degree $n$ and divisible by $\mu_x$, so both equal $\mu_x$. The two maps therefore agree on a dense subset of $\J$, and hence everywhere by continuity. In particular, $\chi_\J$ is independent of the chosen basis.

For every $x\in\J$, each irreducible factor of $\chi_\J(x;\cdot)$ divides $\mu_x$~\cite[Theorem~1(v)]{Jacobson1963}. Together with $\mu_x\mid\chi_\J(x;\cdot)$, this gives
\begin{equation}\label{eq:characteristic-spectrum}
 Z(\chi_\J(x;\cdot))=Z(\mu_x)
 \stackrel{\eqref{eq:spc_powerassoc}}{=}\spc_\J(x),
 \qquad x\in\J.
\end{equation}
For $\lambda\in\C$, the minimal polynomial of $\lambda 1_\J$ is $t-\lambda$. Hence $\chi_\J(\lambda 1_\J;\cdot)$ is monic of degree $n$ with $\lambda$ as its only root, so
\begin{equation}\label{eq:scalar-characteristic}
 \chi_\J(\lambda 1_\J;t)=(t-\lambda)^n,
 \qquad\lambda\in\C.
\end{equation}

\subsection{The simple Jordan algebras}\label{subsec:models}

We recall the standard models of the simple summands in~\eqref{eq:decompositions} and their Jordan ranks.

Let $V$ be a finite-dimensional complex vector space with $\dim_\C V\ge2$, and let $b:V\times V\to\C$ be a nondegenerate symmetric bilinear form. The \emph{spin factor} $\mathcal S(V,b):=\C 1\oplus V$
has identity $1$ and product
\begin{equation}\label{eq:spin-product}
 (\alpha1+u)\circ(\beta 1+v)
 :=(\alpha\beta+b(u,v))1+\alpha v+\beta u,
 \qquad \alpha,\beta\in\C,\quad u,v\in V.
\end{equation}
For $\alpha\in\C$ and $v\in V$, the polynomial $(t-\alpha)^2-b(v,v)$ annihilates $\alpha1+v$, so every minimal polynomial has degree at most $2$. By nondegeneracy, we can choose $v\in V$ with $b(v,v)=1$. Then $v^2=1$ and $v$ is not scalar, so $\mu_v=t^2-1$. Hence $\mathcal S(V,b)$ has Jordan rank $2$.

For $n\in\N$ and $A\in M_n(\C)$, we write $A^t$
for its transpose and
\[
 \chi(A;t):=\det(tI_n-A)
\]
for its characteristic polynomial. We use $\diag(\cdot)$ for diagonal and block diagonal matrices. The \emph{full matrix Jordan algebra} $M_n(\C)^+$ carries the product~\eqref{eq:symmetrized-product} and has identity $I_n$. The \emph{symmetric matrix algebra}
\[
 H_n(\C):=\{A\in M_n(\C):A^t=A\}
\]
is a Jordan subalgebra of $M_n(\C)^+$ with the same identity.

In both algebras, Jordan powers are ordinary matrix powers, so minimal polynomials are the usual matrix minimal polynomials. The Cayley--Hamilton theorem gives $\deg\mu_A\le n$, with equality at $A=\diag(1,2,\ldots,n)$. Thus both algebras have Jordan rank $n$. Since $A\mapsto\chi(A;\cdot)$ is a polynomial map into $\Pol_n$ whose value at $A$ annihilates $A$, the uniqueness property established in Section~\ref{subsec:generic} gives
\begin{equation}\label{eq:matrix-characteristic}
 \chi_{H_n(\C)}(A;t)=\chi(A;t), \qquad A\in H_n(\C),
\end{equation}
and likewise
$\chi_{M_n(\C)^+}(A;t)=\chi(A;t)$ for $A\in M_n(\C)$.

For $n\in\N$, set
\[
 R_n:=\begin{bmatrix}0&I_n\\-I_n&0\end{bmatrix} \in M_{2n}(\C).
\]
The \emph{symplectic Hermitian algebra}
\begin{equation}\label{eq:symplectic-model}
 H_{2n}(\C,\mathrm{sp}):=\{A\in M_{2n}(\C):A^tR_n=R_nA\}
\end{equation}
is a Jordan subalgebra of $M_{2n}(\C)^+$ with identity $I_{2n}$.

Finally, let $\mathbb O_s$ be the split octonion algebra over $\C$. Its standard conjugation $\mathbb O_s\to\mathbb O_s$, $z\mapsto\overline z$, is linear, and we identify $\C$ with $\C 1_{\mathbb O_s}$. The \emph{split Albert algebra} is
\begin{equation}\label{eq:albert-model}
 H_3(\mathbb O_s):=
 \left\{
 \begin{bmatrix}
  \alpha&a&b\\
  \overline a&\beta&c\\
  \overline b&\overline c&\gamma
 \end{bmatrix}
 :\alpha,\beta,\gamma\in\C,\, a,b,c\in\mathbb O_s
 \right\},
\end{equation}
with product $A\circ B:=\frac12(AB+BA)$ for $A,B\in H_3(\mathbb O_s)$, where matrix products are formed with octonion multiplication, and identity $I_3$. The algebra $\mathbb O_s$ is alternative but not associative~\cite[Part~I,  \S\S\,~2.10--2.12]{McCrimmon2004}. Moreover, $H_3(\mathbb O_s)$ is \emph{exceptional}: it is not isomorphic to a Jordan subalgebra of $\mathcal D^+$ for any associative complex algebra $\mathcal D$~\cite[Part~I,  \S\S\,~1.10 and~2.13]{McCrimmon2004}.

By~\cite[\S\,1, p.~27 and \S\,4, pp.~37--38]{Jacobson1963}, we have $\deg\mu_A\le n$ for $A\in H_{2n}(\C,\mathrm{sp})$ and $\deg\mu_A\le3$ for $A\in H_3(\mathbb O_s)$. Equality is attained at $\diag(D,D)\in H_{2n}(\C,\mathrm{sp})$, where $D:=\diag(1,\ldots,n)\in M_n(\C)$, and at $\diag(1,2,3)\in H_3(\mathbb O_s)$, respectively.

We use the following classification~\cite[Part~I, \S\,2.13]{McCrimmon2004}.

\begin{theorem}[Classification over $\C$]\label{thm:Jordan-models}
Up to isomorphism, the finite-dimensional simple complex Jordan algebras are precisely the following:
\begin{enumerate}[label=\textup{(\alph*)}]
\item the scalar algebra $\C$;
\item the spin factors $\mathcal S(V,b)$, where
$\dim_\C V\ge2$ and $b$ is nondegenerate and symmetric;
\item $M_n(\C)^+$, where $n\ge3$;
\item $H_n(\C)$, where $n\ge3$;
\item $H_{2n}(\C,\mathrm{sp})$, where $n\ge3$;
\item the split Albert algebra $H_3(\mathbb O_s)$.
\end{enumerate}
Their Jordan ranks in \textup{(a)}--\textup{(f)}
are $1$, $2$, $n$, $n$, $n$ and $3$, respectively.
\end{theorem}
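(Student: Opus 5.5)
The classification itself is the standard theorem cited from McCrimmon, so I will not reprove it. What is left to check is the list of Jordan ranks in (a)--(f). The plan is to compute each rank directly from the definition $\rank\J=\max\{\deg\mu_x:x\in\J\}$ of Section~\ref{subsec:generic}. For each model I give an upper bound valid for every element and exhibit one element attaining it.

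Case (a) is immediate. Every $x\in\C$ equals $x\cdot 1$, so $\mu_x=t-x$ and the rank is $1$.

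For (b), I invoke the computation already made in Section~\ref{subsec:models}. Every $\alpha1+v$ is annihilated by $(t-\alpha)^2-b(v,v)$, which gives the bound $2$. A vector $v$ with $b(v,v)=1$ exists by nondegeneracy, and for it $\mu_v=t^2-1$.

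For (c) and (d), Jordan powers coincide with matrix powers, so Cayley--Hamilton bounds $\deg\mu_A$ by $n$. The diagonal matrix $\diag(1,\ldots,n)$ is symmetric and has $n$ distinct eigenvalues, so its minimal polynomial has degree exactly $n$.

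For (e), I again use that powers in $H_{2n}(\C,\mathrm{sp})$ are matrix powers. The upper bound $\deg\mu_A\le n$, rather than the naive $2n$, comes from Jacobson's computation quoted just before the theorem. If I wanted it self-contained, I would argue that $A^tR_n=R_nA$ means $R_nA$ is skew-symmetric. Then $\det(tI-A)$ is the square of a Pfaffian-type polynomial of degree $n$ in $t$, which annihilates $A$ by a generic argument. For equality, note that $\diag(D,D)$ satisfies the defining relation and has $n$ distinct eigenvalues, so its minimal polynomial has degree $n$.

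For (f), the bound $\deg\mu_A\le3$ is again taken from Jacobson. The element $\diag(1,2,3)$ lies in $H_3(\mathbb O_s)$. Its powers are the diagonal matrices $\diag(1,2^k,3^k)$, and these are linearly independent for $k=0,1,2$ by Vandermonde, so its minimal polynomial has degree $3$.

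The main obstacle is the upper bound in (e), and to a lesser extent in (f). Both rely on the Pfaffian or cubic-norm structure and cannot be read off from matrix size. Since the paper already quotes these bounds from Jacobson, I will cite them rather than rederive them.
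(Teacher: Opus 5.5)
Your proposal is correct and follows the paper's approach. The paper also cites the classification from McCrimmon and checks the ranks as in Subsection~\ref{subsec:models}: it pairs upper bounds with elements attaining them. The bounds are the quadratic $(t-\alpha)^2-b(v,v)$, Cayley--Hamilton, and Jacobson's bounds for $H_{2n}(\C,\mathrm{sp})$ and $H_3(\mathbb O_s)$; the attaining elements are $v$ with $b(v,v)=1$, $\diag(1,\ldots,n)$, $\diag(D,D)$ and $\diag(1,2,3)$.

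Your optional Pfaffian sketch for (e) also works. The space $\ker(A-\lambda I)=\ker(R_nA-\lambda R_n)$ is the kernel of a skew-symmetric matrix and hence even-dimensional. So when the Pfaffian factor has simple roots, $A$ is diagonalizable and that factor annihilates $A$, and density and continuity extend this to all $A$.
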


The restrictions $n\ge3$ in~\textup{(c)}--\textup{(e)} avoid repetitions with~\textup{(a)} and~\textup{(b)}. For $n=1$, the three matrix models are isomorphic to $\C$, and for $n=2$, they are isomorphic to spin factors.

\subsection{Spectral coordinates}\label{subsec:coordinates}

We now express the spectrum in terms of the generic minimal polynomials of the simple quotient coordinates.

For the decompositions~\eqref{eq:decompositions}, set
\begin{equation}\label{eq:spectral-factors}
 \begin{alignedat}{3}
 n_j&:=\rank\J_j,\qquad
 &P_j(x;t)&:=\chi_{\J_j}(\pi_{\A,j}(x);t),\qquad
 &x&\in\A,\quad 1\le j\le p,\\
 m_k&:=\rank\K_k,\qquad
 &T_k(y;t)&:=\chi_{\K_k}(\pi_{\B,k}(y);t),\qquad
 &y&\in\B,\quad 1\le k\le q.
 \end{alignedat}
\end{equation}
The maps $P_j:\A\to\Pol_{n_j}$ and $T_k:\B\to\Pol_{m_k}$ are polynomial. Their values are called the \emph{source factors} and \emph{target factors}, respectively.

We set $j_\A:=1$ if $\A$ is unital and $j_\A:=0$ otherwise. In the nonunital case, we also define
\begin{equation}\label{eq:zero-factor}
 n_0:=1,\qquad P_0(x;t):=t,\qquad x\in\A.
\end{equation}
This factor accounts for the spectral value $0$ contributed by unitization. Thus the source indices range from $j_\A$ to $p$. Empty sums and products are $0$ and $1$, respectively.

\begin{lemma}\label{lem:reduction}
For every $x\in\A$,
\begin{equation}\label{eq:spectral-reduction}
 \spc_\A(x)=Z\left(\prod_{j=j_\A}^{p}P_j(x;\cdot)\right).
\end{equation}
\end{lemma}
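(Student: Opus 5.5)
We first replace $\A$ by $\A^+$, which has the same minimal polynomials and spectra by Section~\ref{subsec:quotient}. By~\eqref{eq:characteristic-spectrum}, the zero set of $\prod_{j\ge1}P_j(x;\cdot)$ is $\bigcup_{j}\spc_{\J_j}(\pi_{\A,j}(x))$. In the nonunital case the extra factor $P_0=t$ contributes $0$. The lemma therefore reduces to showing
\[
 \spc_\A(x)=\bigcup_{j=1}^p\spc_{\J_j}(\pi_{\A,j}(x))\cup Z_0,
\]
where $Z_0=\emptyset$ if $\A$ is unital and $Z_0=\{0\}$ otherwise. We may use~\eqref{eq:zero-in-spectrum}, which already places $0$ in $\spc_\A(x)$ in the nonunital case.

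For the inclusion ``$\supseteq$'', we extend $\pi_{\A,j}$ to a unital map $(\A^1)^+\to\J_j$ (in the nonunital case by $1\mapsto1_{\J_j}$). In the unital case $\pi_\A$ is already unital, and each coordinate projection $\widehat\A\to\J_j$ sends $1$ to $1_{\J_j}$. The extension preserves powers. Hence, by the argument of~\eqref{eq:hom-shrinking}, $\mu_x(\pi_{\A,j}(x))=0$, so $\mu_{\pi_{\A,j}(x)}\mid\mu_x$ and $\spc_{\J_j}(\pi_{\A,j}(x))\subseteq\spc_\A(x)$.

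For the inclusion ``$\subseteq$'', set $Q:=\prod_{j=1}^p\mu_{\pi_{\A,j}(x)}$, and multiply it by $t$ when $\A$ is nonunital. We claim $Q(x)$ is nilpotent in $\A^1$. The minimal polynomial of $\pi_\A(x)$ in $\widehat\A^1$ (or in $\widehat\A$) divides the least common multiple of the coordinate minimal polynomials; in the nonunital case this holds after multiplying by $t$, which also covers the case $p=0$. Since $\pi_\A$ preserves positive powers, this gives $\pi_\A(Q(x))=0$. Here $Q$ has zero constant term in the nonunital case, so $Q(x)\in\A$. In the unital case $\pi_\A$ is unital, so $Q(x)\in\Nil(\A^+)$ in both cases. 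Thus $Q(x)^k=0$ for some $k$, and by power-associativity $Q^k(x)=0$. Hence $\mu_x\mid Q^k$, and $\spc_\A(x)=Z(\mu_x)\subseteq Z(Q)$, which is the right-hand side. The only subtlety is the bookkeeping of constant terms in the nonunital case, which the factor $t$ handles.
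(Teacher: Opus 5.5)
Your proposal is correct and follows essentially the same argument as the paper: you get one inclusion from $\mu_{\pi_{\A,j}(x)}\mid\mu_x$ together with \eqref{eq:characteristic-spectrum}, and the other by showing $Q(x)\in\Nil(\A^+)$, so $\mu_x\mid Q^k$. The only differences are cosmetic. You build $Q$ from the coordinate minimal polynomials (times $t$ in the nonunital case) rather than from $\prod_j P_j(x;\cdot)$ with $P_0=t$. You also handle the nonunital case by extending $\pi_{\A,j}$ unitally to $(\A^1)^+$ instead of invoking $\mu_x(0)=0$.
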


\begin{proof}
Fix $x\in\A$ and define $Q\in\C[t]$ by
\[
 Q(t):=\prod_{j=j_\A}^{p}P_j(x;t).
\]
The polynomial $Q$ has zero constant term when $\A$ is nonunital; thus $Q(x)\in\A$ in either case. Each coordinate map $\pi_{\A,j}:\A^+\to\J_j$, $1\le j\le p$, is a homomorphism that is unital when $\A$ is unital, so it commutes with evaluation of $Q$. By~\eqref{eq:hom-shrinking}, $\mu_{\pi_{\A,j}(x)}$ divides $\mu_x$. Hence~\eqref{eq:characteristic-spectrum} gives
\[
 Z(P_j(x;\cdot))\subseteq\spc_\A(x).
\]
Together with~\eqref{eq:zero-in-spectrum} in the nonunital case, this yields $Z(Q)\subseteq\spc_\A(x)$.

Conversely, $P_j(x;\cdot)$ annihilates
$\pi_{\A,j}(x)$ and divides $Q$, so
\[
 \pi_{\A,j}(Q(x))=Q(\pi_{\A,j}(x))=0, \qquad 1\le j\le p.
\]
Consequently,
\[
 Q(x)\in\ker\pi_\A=\Nil(\A^+).
\]
Since symmetrization preserves powers, $Q(x)^r=0$ for some $r\in\N$. Evaluation in $\C[x]$ gives
$(Q^r)(x)=Q(x)^r=0$, so $\mu_x$ divides $Q^r$. Therefore $\spc_\A(x)\subseteq Z(Q)$, proving
\eqref{eq:spectral-reduction}.
\end{proof}

\section{Main result}\label{sec:main}

Our main theorem expresses the target factors of continuous spectrum-shrinking maps as products of source factors. It gives exact existence criteria and characterizes spectrum preservation.

\begin{theorem}\label{thm:classification}
Let $\A$ and $\B$ be finite-dimensional complex power-associative algebras, with the notation of Subsection~\ref{subsec:coordinates}.
\begin{enumerate}[label=\textup{(\alph*)}]
\item If $\A$ is unital and $\B$ is nonunital, there is no spectrum-shrinking map $\A\to\B$.

\item Suppose that $\A$ is nonunital or $\B$ is unital. A continuous map $\phi:\A\to\B$ is spectrum-shrinking if and only if there are integers $\ell_j^k\in\N_0$, $j_\A\le j\le p$, $1\le k\le q$, such that
\begin{equation}\label{eq:multiplicities}
 T_k(\phi(x);t)  =\prod_{j=j_\A}^{p}P_j(x;t)^{\ell_j^k},
 \qquad x\in\A,\quad 1\le k\le q.
\end{equation}
These exponents are uniquely determined by $\phi$ and satisfy the linear Diophantine system
\begin{equation}\label{eq:rank-equations}
 \sum_{j=j_\A}^{p}n_j\ell_j^k=m_k, \qquad 1\le k\le q.
\end{equation}
A continuous spectrum-shrinking map $\A\to\B$ exists if and only if~\eqref{eq:rank-equations} has a solution in nonnegative integers. Every such solution is realized by a polynomial spectrum-shrinking map.

\item A continuous spectrum-shrinking map $\phi:\A\to\B$, with exponents $\ell_j^k$ as in~\textup{(b)}, is spectrum-preserving if and only if
\begin{equation}\label{eq:coverage}
 \sum_{k=1}^{q}\ell_j^k>0, \qquad 1\le j\le p,
\end{equation}
and $\sum_{k=1}^{q}\ell_0^k>0$ when $\A$ is nonunital and $\B$ is unital. Consequently, a continuous spectrum-preserving map $\A\to\B$ exists if and only if the case in~\textup{(a)} does not occur and~\eqref{eq:rank-equations} has a solution in nonnegative integers satisfying these positivity conditions. Every such solution is realized by a polynomial spectrum-preserving map.
\end{enumerate}
\end{theorem}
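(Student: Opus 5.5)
Part (a) is short. If $\A$ is unital and $\B$ is nonunital, then $0\in\spc_\B(\phi(0_\A))$ by~\eqref{eq:zero-in-spectrum}. On the other hand, $\spc_\A(0_\A)=\{0\}$ gives no contradiction, so we evaluate at $x=1_\A$ instead: $\spc_\A(1_\A)=\{1\}$ because $\mu_{1_\A}=t-1$. The shrinking condition forces $\spc_\B(\phi(1_\A))\subseteq\{1\}$, which contradicts $0\in\spc_\B(\phi(1_\A))$.

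For (b), the easy direction goes as follows. Suppose~\eqref{eq:multiplicities} holds. By Lemma~\ref{lem:reduction} applied to $\B$, $\spc_\B(\phi(x))$ is the union of the sets $Z(T_k(\phi(x);\cdot))$, together with $\{0\}$ when $\B$ is nonunital. Each $Z(T_k(\phi(x);\cdot))$ lies in $\bigcup_j Z(P_j(x;\cdot))=\spc_\A(x)$. When $\B$ is nonunital, $\A$ is nonunital by hypothesis, so $0\in\spc_\A(x)$ as well. Comparing degrees in~\eqref{eq:multiplicities} gives~\eqref{eq:rank-equations}. Uniqueness of the exponents follows by choosing $x$ whose source factors have pairwise disjoint root sets, which the sections below provide.

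For realization, we first construct polynomial sections. For each simple model in Theorem~\ref{thm:Jordan-models} we want a polynomial map $s_j:\Pol_{n_j}\to\J_j$ with $\chi_{\J_j}(s_j(Q);\cdot)=Q$. Companion matrices work for $M_n(\C)^+$ and $H_n(\C)$; for $H_n(\C)$ we use a symmetric companion-type form, or conjugate the companion matrix into $H_n(\C)$ by a fixed symmetric factorization, and check this by~\eqref{eq:matrix-characteristic}. For the symplectic case we use $\diag(C,C^t)$ with $C$ a companion matrix. For the spin factor we take $\alpha1+\beta v$ with $b(v,v)=1$, adjusting to avoid square roots. For $\C$ and $H_3(\mathbb O_s)$ we use explicit diagonal-plus-off-diagonal forms. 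This is Proposition~\ref{prop:sections}. Given a solution $(\ell_j^k)$, we set $\phi(x):=\sigma\bigl(\sum_k s_k(\prod_j P_j(x;\cdot)^{\ell_j^k})\bigr)$, where $\sigma$ is a linear right inverse of $\pi_\B$. When $\B$ is unital we arrange that $\sigma$ maps $1_{\widehat\B}$ to $1_\B$; this is possible since $\pi_\B(1_\B)=1$. By the converse direction, this $\phi$ is a polynomial spectrum-shrinking map.

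The hard direction is showing that every continuous spectrum-shrinking $\phi$ satisfies~\eqref{eq:multiplicities}, and I expect this to be the main obstacle. Consider the parameter space $\Omega$ of tuples $(\lambda_{j,i})$ of pairwise distinct complex numbers, $1\le i\le n_j$, $j_\A\le j\le p$, required to be nonzero in the nonunital case. The space $\Omega$ is connected, being the complement of a hyperplane arrangement in $\C^N$. Map each tuple to $x_\lambda:=\rho(\sum_j s_j(\prod_i(t-\lambda_{j,i})))$, where $\rho$ is a linear right inverse of $\pi_\A$. Its source factors are the prescribed products, so $\spc_\A(x_\lambda)$ consists of the distinct $\lambda$'s (plus $0$). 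Each root of $T_k(\phi(x_\lambda);\cdot)$ lies in this finite set. By continuity of roots (Rouch\'e), the multiplicity of $\lambda_{j,i}$ in $T_k$ is locally constant on $\Omega$, hence constant, say $\ell_{j,i}^k$; the multiplicity of $0$ is constant as well. Permuting $i$ within a fixed $j$ leaves $x_\lambda$ unchanged, so $\ell_{j,i}^k=\ell_j^k$, and~\eqref{eq:multiplicities} holds at each $x_\lambda$. To pass to general $x$, let $U$ be the set of $x$ at which $\prod_j P_j(x;\cdot)$ (with the factor $t$) has simple roots. This set is the complement of a discriminant hypersurface, which is proper because the $x_\lambda$ lie in $U$, so $U$ is connected, open and dense. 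On $U$ the roots vary locally holomorphically and are locally labelled by $(j,i)$. The same Rouch\'e argument shows that the multiplicities of these roots in $T_k(\phi(x);\cdot)$ are locally constant, hence constant on $U$, and equal to the values found at the $x_\lambda$. Thus~\eqref{eq:multiplicities} holds on $U$ and, by density and continuity, on all of $\A$.

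For (c), by Lemma~\ref{lem:reduction}, equality of spectra means that every root of every $P_j(x;\cdot)$ appears in some $T_k$, plus the root $0$ when $\B$ is nonunital. If the positivity conditions hold, this is immediate from~\eqref{eq:multiplicities}. If some $j\ge1$ has $\sum_k\ell_j^k=0$, choose $x_\lambda$ with $\lambda_{j,1}$ distinct from all other roots and from $0$; then $\lambda_{j,1}$ lies in $\spc_\A(x_\lambda)$ but not in $\spc_\B(\phi(x_\lambda))$. The analogous argument with $\lambda=0$ handles $j=0$ when $\B$ is unital. When $\B$ is nonunital, $0\in\spc_\B$ automatically, so no condition on $\ell_0^k$ is needed. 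The existence statements then follow from the realization construction in (b).
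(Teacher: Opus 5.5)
Your argument is correct and follows the paper's proof. Polynomial sections composed with a linear right inverse of $\pi_\A$ parametrize elements by ordered tuples of distinct roots. Multiplicities are constant on this connected space and equal within each source factor. Rouch\'e's theorem on the connected discriminant complement, followed by density, gives~\eqref{eq:multiplicities}, and test elements give the converse in~\textup{(c)}. The differences are cosmetic: you use Rouch\'e instead of Lemma~\ref{lem:topology}\textup{(b)} in the first step, and the elements $x_\lambda$ instead of elements with scalar quotient coordinates in~\textup{(c)}. On the discriminant complement the roots are only locally labelled, so your ``constant on $U$'' is best phrased as in the paper: the set where $T_k(\phi(x);\cdot)=\prod_jP_j(x;\cdot)^{\ell_j^k}$ is open and closed.

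One caveat concerns your sketch of Proposition~\ref{prop:sections}, on which the polynomial realizations rest. For $n\ge2$, no fixed similarity carries all companion matrices into $H_n(\C)$. Such a $D$ would give an invertible symmetric $S=D^tD$ with $C(Q)^tS=SC(Q)$ for every $Q$. Since $C(Q)-C(t^n)$ is a rank-one matrix $-ae^t$ (or $-ea^t$), with $e$ fixed and $a\in\C^n$ arbitrary, this forces $S$ to be singular. The paper avoids this by using the Hessenberg--Toeplitz matrices $N+\sum_r\beta_r(N^t)^r$ instead of companion matrices. These are persymmetric, so one fixed $D$ with $D^tD=E$ conjugates them all into $H_n(\C)$. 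Their characteristic coefficients are triangular in the $\beta_r$ with nonzero leading coefficients, so the $\beta_r$ are polynomial in $Q$. The Albert case then follows from $H_3(\C)\subseteq H_3(\mathbb O_s)$. Since you invoke the proposition itself, this does not affect the proof of the theorem.
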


For a continuous spectrum-shrinking map $\phi:\A\to\B$, \eqref{eq:multiplicities} shows that the maps $x\mapsto T_k(\phi(x);\cdot)$ from $\A$ to $\Pol_{m_k}$, $1\le k\le q$, are polynomial and factor through $\pi_\A$. The map $\phi$ itself need not be polynomial or factor through $\pi_\A$, as Example~\ref{ex:nil-dependence} shows.

In fact, parts \textup{(b)} and \textup{(c)} remain valid if continuity of $\phi$ is replaced by continuity of each of these maps, since the proof applies Lemma~\ref{lem:multiplicities} only to these maps. This assumption is strictly weaker in general. For example, the spectrum-preserving map $\psi:\C\to M_2(\C)$ defined by $\psi(0):=0$ and
\[
 \psi(\lambda):=
 \begin{bmatrix}\lambda&1\\0&\lambda\end{bmatrix},
 \qquad \lambda\in\C\setminus\{0\},
\]
is discontinuous at $0$, whereas its target-factor map $\lambda\mapsto(t-\lambda)^2$, $\lambda\in\C$, is polynomial.

Suppose now that $\A$ and $\B$ are unital and $\widehat\A$ and $\widehat\B$ are simple, of Jordan ranks $n$ and $m$, respectively. Then~\eqref{eq:rank-equations} reduces to $n\ell=m$ with $\ell\in\N_0$. Hence a continuous spectrum-shrinking map $\A\to\B$ exists if and only if $n$ divides $m$, and every such map is spectrum-preserving since $\ell=m/n>0$. For $\A=M_n(\C)$ and $\B=M_m(\C)$, the source and target factors are the ordinary matrix characteristic polynomials by Section~\ref{subsec:models}. Thus~\eqref{eq:multiplicities} recovers the matrix identity~\eqref{eq:matrix-multiplicities} of~\cite[Corollary~2.2(a)]{CGT2026}.

\subsection{Polynomial sections}\label{subsec:sections}

For each simple complex Jordan algebra $\J$, we construct a polynomial right inverse of the map
$\J\to\Pol_{\rank\J}$, $x\mapsto\chi_\J(x;\cdot)$. These sections produce elements with prescribed generic minimal polynomials without ordering their roots and give the polynomial realizations in Theorem~\ref{thm:classification}.

Fiedler constructed symmetric matrices with a prescribed characteristic polynomial~\cite[Theorem~2]{Fiedler1990}. For $n\ge2$, a polynomial right inverse of $A\mapsto\chi(A;\cdot)$ on $H_n(\C)$ also follows from~\cite[Theorem~12]{KostantRallis1971}, applied to traceless symmetric matrices and then adding the scalar part. We give an elementary construction using Toeplitz matrices, whose entries are constant along each diagonal. The construction combines the coefficient formula of~\cite[Lemma~4]{MMP1999} with the fixed similarity of~\cite[Theorem~3.1]{CLNT2017}; we include the details needed to obtain polynomial dependence on the prescribed coefficients.

\begin{proposition}\label{prop:sections}
Let $\J$ be a finite-dimensional simple complex Jordan algebra of Jordan rank $n\in\N$. There is a polynomial map $\Gamma_\J:\Pol_n\to\J$ such that
\begin{equation}\label{eq:section}
 \chi_\J(\Gamma_\J(Q);t)=Q(t), \qquad Q\in\Pol_n.
\end{equation}
\end{proposition}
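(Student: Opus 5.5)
The plan is to reduce, via Theorem~\ref{thm:Jordan-models}, to the six model algebras. In each case we build an explicit polynomial map $\Gamma_\J$ and identify $\chi_\J(\Gamma_\J(Q);\cdot)$ by the uniqueness property from Subsection~\ref{subsec:generic}: $x\mapsto\chi_\J(x;\cdot)$ is the only continuous map $\J\to\Pol_n$ whose value at each $x$ annihilates $x$. Isomorphisms are linear, so a section for a model transports to every isomorphic algebra.

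First we record a transfer principle. Let $\iota:\J'\to\J$ be an injective unital Jordan homomorphism with $\rank\J'=\rank\J=n$. The map $x'\mapsto\chi_\J(\iota(x');\cdot)$ is continuous into $\Pol_n$. Its value at $x'$ annihilates $\iota(x')$, and hence annihilates $x'$ by injectivity, because $\iota$ commutes with polynomial evaluation. Uniqueness in $\J'$ therefore gives $\chi_\J\circ\iota=\chi_{\J'}$. Consequently $\Gamma_\J:=\iota\circ\Gamma_{\J'}$ is a section for $\J$.

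We then apply this principle to the models.
\begin{enumerate}[label=\textup{(\alph*)}]
\item For $\J=\C$, the map $\Gamma(t+\alpha_0):=-\alpha_0$ works.
\item For a spin factor $\mathcal S(V,b)$, nondegeneracy with $\dim_\C V\ge2$ gives $u,w\in V$ with $b(u,u)=b(w,w)=0$ and $b(u,w)=\tfrac12$. Then $b(u+cw,u+cw)=c$ for every $c\in\C$. For $Q=t^2+\alpha_1t+\alpha_0$, set
\[
 \Gamma(Q):=-\tfrac{\alpha_1}{2}1+u+\bigl(\tfrac{\alpha_1^2}{4}-\alpha_0\bigr)w .
\]
The computation in Subsection~\ref{subsec:models} shows that $(t+\tfrac{\alpha_1}{2})^2-b(v,v)$ annihilates $\alpha 1+v$ with $\alpha=-\tfrac{\alpha_1}{2}$ and $v=u+(\tfrac{\alpha_1^2}{4}-\alpha_0)w$. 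By uniqueness, $\chi(\alpha1+v;t)=(t-\alpha)^2-b(v,v)$, and this equals $Q$ here.
\item The key case is $H_n(\C)$; once it is done, \eqref{eq:matrix-characteristic} reduces everything else to it. Let $N$ be the $n\times n$ nilpotent shift and $K$ the exchange matrix. For $c=(c_0,\ldots,c_{n-1})$, consider the Toeplitz matrix $X_c:=N^t+\sum_{k=0}^{n-1}c_kN^k$. Expanding $\det(tI_n-X_c)$, following \cite[Lemma~4]{MMP1999}, the coefficient of $t^{n-1-k}$ equals $-c_k$ plus a polynomial in $c_0,\ldots,c_{k-1}$. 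This triangular system can be inverted polynomially, giving a polynomial map $Q\mapsto c(Q)$ with $\chi(X_{c(Q)};\cdot)=Q$. Every Toeplitz matrix $X$ is persymmetric, that is, $KX^tK=X$. Put $U:=\tfrac{1}{\sqrt2}(I+iK)$. Then $U$ commutes with $K$ and $U^2=iK$, so $U^{-1}=-iKU$. A direct computation gives $(UXU^{-1})^t=U^{-1}X^tU=U^{-1}KXKU=UXU^{-1}$. Hence $\Gamma(Q):=UX_{c(Q)}U^{-1}\in H_n(\C)$ has characteristic polynomial $Q$.
\end{enumerate}

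The remaining models follow from the transfer principle. For $M_n(\C)^+$, apply it to the inclusion $H_n(\C)\subseteq M_n(\C)^+$. For $H_{2n}(\C,\mathrm{sp})$, apply it to $A\mapsto\diag(A,A)$ on $H_n(\C)$: for $A$ symmetric one checks $\diag(A,A)^tR_n=R_n\diag(A,A)$, and both algebras have rank $n$. For $H_3(\mathbb O_s)$, apply it to the inclusion of $H_3(\C)$, viewed inside $H_3(\mathbb O_s)$ via $\C=\C1_{\mathbb O_s}$, where the products agree and both ranks are $3$.

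The main obstacle I expect is the $H_n(\C)$ step. One must verify that the coefficients of $\det(tI_n-X_c)$ are unitriangular in $c$, so that the inversion is polynomial rather than merely algebraic. One must also ensure that the symmetrizing similarity is independent of $Q$; the fixed persymmetric-to-symmetric conjugation by $U$ is exactly what provides this.
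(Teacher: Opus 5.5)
Your proposal is correct and is essentially the paper's argument: a Hessenberg--Toeplitz matrix whose characteristic coefficients form a triangular system in its parameters, a fixed similarity by a symmetric square root of a multiple of the exchange matrix (your $U\propto I+iK$ plays the role of the paper's $D=\tfrac{1+i}{2}(I_n-iE)$), and transfer to the other models through unital injective Jordan embeddings of $H_n(\C)$ together with uniqueness of $\chi_\J$; your direct spin-factor formula is a harmless substitute for the paper's embedding of $H_2(\C)$. One correction: the coefficient of $t^{n-1-k}$ in $\det(tI_n-X_c)$ is $-(n-k)c_k$ plus a polynomial in $c_0,\ldots,c_{k-1}$ (already the $t^{n-1}$ coefficient is minus the trace, namely $-nc_0$), so the system is triangular with nonzero constant diagonal entries rather than unitriangular, which still gives the polynomial inversion you need.
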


\begin{proof}
\emph{Step~1: symmetric matrices.}
Let $N\in M_n(\C)$ have ones on the superdiagonal and zeros elsewhere. For $\beta_0,\ldots,\beta_{n-1}\in\C$, set $v:=(\beta_0,\ldots,\beta_{n-1})\in\C^n$. Define a polynomial map
\[
 M:\C^n\to M_n(\C), \qquad
 M(v):=N+\sum_{r=0}^{n-1}\beta_r(N^t)^r.
\]
For $1\le k\le n$, define a map
\[
 c_k:\C^n\to\C, \qquad
 c_k(v):=(-1)^k
 \sum_{\substack{I\subseteq\{1,\ldots,n\}\\ |I|=k}}
 \det M(v)_{I,I},
\]
where $M(v)_{I,I}$ denotes the principal submatrix indexed by $I$. Then
\[
 \chi(M(v);t)=t^n+\sum_{k=1}^{n}c_k(v)t^{n-k}.
\]
We claim that there are polynomial maps $f_k:\C^{k-1}\to\C$, $2\le k\le n$, such that
\begin{equation}\label{eq:Toeplitz-coefficients}
 \begin{aligned}
 c_1(v)&=-n\beta_0,\\
 c_k(v)&=-(n-k+1)\beta_{k-1}
 +f_k(\beta_0,\ldots,\beta_{k-2}),
 \qquad2\le k\le n.
 \end{aligned}
\end{equation}
The first identity follows from the diagonal entries of $M(v)$. For $2\le k\le n$, consider a principal submatrix of order $k$ and a term in its determinant expansion that is not identically zero in the parameters. Each selected index occurs once as a row index and once as a column index, so the column-minus-row differences sum to zero. A factor $\beta_r$ contributes $-r$, whereas every difference is at most $1$, with equality exactly for superdiagonal entries. Since the remaining $k-1$ factors contribute at most $k-1$, it follows that $r\le k-1$. If $r=k-1$, the other $k-1$ factors are superdiagonal ones. The largest selected row cannot use a superdiagonal entry, so it contributes $\beta_{k-1}$, while every other selected row uses its superdiagonal entry. The selected indices are therefore consecutive, and the corresponding permutation sends the largest index to the smallest, forming a $k$-cycle. The cycle sign $(-1)^{k-1}$ and the factor $(-1)^k$ in $c_k$ give $-\beta_{k-1}$ for each of the $n-k+1$ intervals. All remaining terms involve only $\beta_0,\ldots,\beta_{k-2}$, which proves~\eqref{eq:Toeplitz-coefficients}.

Given $\alpha_0,\ldots,\alpha_{n-1}\in\C$, set
\[
 Q(t):=t^n+\sum_{j=0}^{n-1}\alpha_jt^j\in\Pol_n.
\]
Define $\beta_0,\ldots,\beta_{n-1}\in\C$ recursively by
\begin{align*}
 \beta_0&:=-\frac{\alpha_{n-1}}{n},\\
 \beta_{k-1}&:=
 \frac{f_k(\beta_0,\ldots,\beta_{k-2})-\alpha_{n-k}}
      {n-k+1},
 \qquad2\le k\le n.
\end{align*}
The recursion defines a polynomial map $\eta:\Pol_n\to\C^n$ by
\[
 \eta(Q):=(\beta_0,\ldots,\beta_{n-1}),\qquad Q\in\Pol_n.
\]
Thus $c_k(\eta(Q))=\alpha_{n-k}$ for $1\le k\le n$, so
\begin{equation}\label{eq:Toeplitz-characteristic}
 \chi(M(\eta(Q));t)=Q(t),
 \qquad Q\in\Pol_n.
\end{equation}

Let $E\in M_n(\C)$ be the reversal matrix, with ones on the antidiagonal and zeros elsewhere.
The identities $N^tE=EN$ and $NE=EN^t$ give
\begin{equation}\label{eq:Toeplitz-intertwining}
 M(v)^tE=EM(v),\qquad v\in\C^n.
\end{equation}
Define $D\in M_n(\C)$ by
\[
 D:=\frac12(I_n+E)+\frac i2(I_n-E),
\]
where $i\in\C$ denotes the imaginary unit. Since $E^t=E$ and $E^2=I_n$, we have
\begin{equation}\label{eq:Toeplitz-Gram}
 D^tD=E.
\end{equation}
In particular, $D$ is invertible. Define a map $\Gamma_n:\Pol_n\to M_n(\C)$ by
\[
 \Gamma_n(Q):=DM(\eta(Q))D^{-1},\qquad Q\in\Pol_n.
\]
Then
\[
\Gamma_n(Q)^t \stackrel{\eqref{eq:Toeplitz-Gram}}{=}
 (D^{-1})^tM(\eta(Q))^tED^{-1} \stackrel{\eqref{eq:Toeplitz-intertwining},\,
 \eqref{eq:Toeplitz-Gram}}{=}
 DM(\eta(Q))D^{-1}=\Gamma_n(Q).
\]
 Thus $\Gamma_n$ is polynomial and takes values in $H_n(\C)$. Similarity gives
\begin{equation}\label{eq:symmetric-section}
 \chi(\Gamma_n(Q);t)=\chi(M(\eta(Q));t)
 \stackrel{\eqref{eq:Toeplitz-characteristic}}{=}Q(t),
 \qquad Q\in\Pol_n.
\end{equation}

\emph{Step~2: the simple Jordan models.}
By Theorem~\ref{thm:Jordan-models}, it suffices to construct, for each standard model $\J$ of Jordan rank $n$, a unital injective Jordan homomorphism $h:H_n(\C)\to\J$. For $\C=H_1(\C)$ and $H_n(\C)$, take the identity map, and for $M_n(\C)^+$, take the natural inclusion.

For the symplectic Hermitian model~\eqref{eq:symplectic-model}, set $h(B):=\diag(B,B)$ for $B\in H_n(\C)$. Block multiplication gives
\[
 h(B)^tR_n=R_nh(B),\qquad  h(B)\circ h(B')
 \stackrel{\eqref{eq:symmetrized-product}}{=}h(B\circ B'), \qquad B,B'\in H_n(\C).
\]
This map is injective and sends $I_n$ to $I_{2n}$.

For a spin factor $\mathcal S(V,b)$, choose $u_1,u_2\in V$ such that
\[
 b(u_1,u_1)=b(u_2,u_2)=1,\qquad b(u_1,u_2)=0,
\]
which is possible because $b$ is nondegenerate over $\C$ and $\dim_\C V\ge2$. Define
\[
 h\left(
 \begin{bmatrix}
  \alpha+\beta&\gamma\\
  \gamma&\alpha-\beta
 \end{bmatrix}
 \right):=\alpha1+\beta u_1+\gamma u_2,
 \qquad\alpha,\beta,\gamma\in\C.
\]
The basis
\[
 I_2=\begin{bmatrix}1&0\\0&1\end{bmatrix},\qquad
 \begin{bmatrix}1&0\\0&-1\end{bmatrix},\qquad
 \begin{bmatrix}0&1\\1&0\end{bmatrix}
\]
of $H_2(\C)$ is sent to the linearly independent elements $1,u_1,u_2$. The two nonscalar basis matrices square to $I_2$ and have zero Jordan product. By~\eqref{eq:spin-product}, their images satisfy $u_1^2=u_2^2=1$ and $u_1\circ u_2=0$. Thus $h$ is a unital injective Jordan homomorphism.

For the split Albert algebra~\eqref{eq:albert-model}, take the inclusion $H_3(\C)\hookrightarrow H_3(\mathbb O_s)$. It is a unital injective Jordan homomorphism because standard conjugation fixes $\C$ and octonion multiplication restricts to complex multiplication. For an arbitrary simple $\J$, compose the corresponding model embedding with a Jordan isomorphism from that model onto $\J$.

For any such embedding $h$, the map
\[
 B\mapsto\chi_\J(h(B);\cdot), \qquad B\in H_n(\C),
\]
is polynomial with values in $\Pol_n$. Its value at $B$ annihilates $B$: indeed, it annihilates $h(B)$, and $h$ is injective and preserves polynomial evaluation. The uniqueness property established in Section~\ref{subsec:generic} therefore gives
\begin{equation}\label{eq:embedding-characteristic}
 \chi_\J(h(B);t)=\chi_{H_n(\C)}(B;t)
 \stackrel{\eqref{eq:matrix-characteristic}}{=}\chi(B;t),
 \qquad B\in H_n(\C).
\end{equation}
Setting $\Gamma_\J:=h\circ\Gamma_n$, we obtain
\[
 \chi_\J(\Gamma_\J(Q);t)
 \stackrel{\eqref{eq:embedding-characteristic}}{=}
 \chi(\Gamma_n(Q);t)
 \stackrel{\eqref{eq:symmetric-section}}{=}Q(t),
 \qquad Q\in\Pol_n,
\]
which proves~\eqref{eq:section}.
\end{proof}

\subsection{Multiplicity formula and proof of the main theorem}\label{subsec:proof}

We prove the multiplicity formula by combining the polynomial sections of Proposition~\ref{prop:sections} with the permutation argument of~\cite[proof of Theorem~2.1]{CGT2026}. We first record two elementary topological facts.

\begin{lemma}\label{lem:topology}
\begin{enumerate}[label=\textup{(\alph*)}]
\item Let $V$ be a finite-dimensional complex vector space and let $f\in\Poly(V)$ be nonzero. Then $\{v\in V:f(v)\ne0\}$ is open, dense and path-connected.

\item Let $\mathscr X$ be a nonempty connected topological space, $\mathscr Y$ a Hausdorff space, $r\in\N$, and $g,g_1,\ldots,g_r:\mathscr X\to\mathscr Y$ continuous maps such that, for every $w\in\mathscr X$,
\[
 g(w)\in\{g_1(w),\ldots,g_r(w)\},
 \qquad
 g_j(w)\ne g_k(w), \qquad 1\le j<k\le r.
\]
Then $g=g_s$ for some $1\le s\le r$~\cite[Lemma~2.7]{CGT2026}.
\end{enumerate}
\end{lemma}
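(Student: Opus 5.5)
Both parts are elementary. I would prove (a) by restricting $f$ to complex lines, and (b) by a clopen-partition argument; (b) is quoted from \cite[Lemma~2.7]{CGT2026}, but a self-contained argument is short.

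For (a), set $U:=\{v\in V:f(v)\ne0\}$.
\begin{enumerate}[label=\textup{(\roman*)}]
\item \emph{Openness.} This follows from continuity of $f$.
\item \emph{Density.} If $U$ missed a nonempty open set $W$, then $f$ would vanish on $W$. Expanding $f$ around a point of $W$ in coordinates, all its Taylor coefficients would vanish, so $f=0$, a contradiction. Hence the zero set of $f$ has empty interior and $U$ is dense.
\item \emph{Path-connectedness.} Given $v,w\in U$, consider the complex line $L:\C\to V$, $L(z):=v+z(w-v)$. The function $g:=f\circ L$ is a polynomial in one variable with $g(0)=f(v)\ne0$. So $g$ is nonzero and has only finitely many zeros. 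Since $\C$ minus a finite set is path-connected (it is a connected open subset of $\C\cong\mathbb R^2$), there is a path $\sigma$ from $0$ to $1$ avoiding these zeros. Then $L\circ\sigma$ is a path from $v$ to $w$ inside $U$.
\end{enumerate}
The only point needing care is that the real codimension-two phenomenon is what makes this work. Taking the complex line, rather than a real segment, is essential: a real segment could meet the zero set of $f$.

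For (b), define for $1\le s\le r$
\[
 \mathscr X_s:=\{w\in\mathscr X:g(w)=g_s(w)\}.
\]
\begin{enumerate}[label=\textup{(\roman*)}]
\item \emph{Closedness.} Each $\mathscr X_s$ is the preimage of the diagonal of $\mathscr Y\times\mathscr Y$ under the continuous map $w\mapsto(g(w),g_s(w))$. The diagonal is closed because $\mathscr Y$ is Hausdorff, so $\mathscr X_s$ is closed.
\item \emph{Cover.} The hypothesis $g(w)\in\{g_1(w),\ldots,g_r(w)\}$ says that the sets $\mathscr X_1,\ldots,\mathscr X_r$ cover $\mathscr X$.
\item \emph{Disjointness.} Pairwise distinctness of the values $g_j(w)$ says that the sets $\mathscr X_s$ are pairwise disjoint.
\item \emph{Openness.} Each $\mathscr X_s$ is therefore the complement of the finite union of the remaining closed sets, hence open.
\end{enumerate}
So each $\mathscr X_s$ is clopen. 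By connectedness, each $\mathscr X_s$ is either empty or all of $\mathscr X$. Since $\mathscr X\ne\emptyset$ and the sets cover it, some $\mathscr X_s$ equals $\mathscr X$, which means $g=g_s$.
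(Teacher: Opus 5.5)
Your proof is correct and follows essentially the same route as the paper: openness by continuity, density because a nonzero polynomial cannot vanish on a nonempty open set, path-connectedness by restricting $f$ to the complex line through $v$ and $w$ and avoiding the finitely many zeros in $\C$, and for (b) the partition of $\mathscr X$ into finitely many pairwise disjoint closed sets, each of which is therefore clopen. The only additions beyond the paper are your explicit justifications: the Taylor-coefficient argument for density and the closed-diagonal argument for closedness in (b).
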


\begin{proof}
\textup{(a)} The set is open by continuity of $f$, and it is dense because a nonzero polynomial function cannot vanish on a nonempty open set. Let $v,w\in V$ be distinct with $f(v)f(w)\ne0$. The polynomial $\lambda\mapsto f(v+\lambda(w-v))$, $\lambda\in\C$, is nonzero at $0$ and $1$ and has only finitely many zeros. A path from $0$ to $1$ in $\C$ avoiding these zeros yields a path from $v$ to $w$ in $\{x\in V:f(x)\ne0\}$.

\textup{(b)} The sets
$\{w\in\mathscr X:g(w)=g_s(w)\}$, $1\le s\le r$, are closed because $\mathscr Y$ is Hausdorff, and they are pairwise disjoint and cover $\mathscr X$. Hence each of them is also open, as the complement of the union of the others, and connectedness of $\mathscr X$ yields the claim.
\end{proof}

\begin{lemma}\label{lem:multiplicities}
Let $\A$ be a finite-dimensional complex power-associative algebra, with the notation of Subsection~\ref{subsec:coordinates}. Let $m\in\N$, and let $F:\A\to\Pol_m$ be continuous, with $Z(F(x))\subseteq\spc_\A(x)$ for every $x\in\A$. Write $F(x;t):=F(x)(t)$. Then there are unique integers $\ell_j\in\N_0$, $j_\A\le j\le p$, such that
\begin{equation}\label{eq:polynomial-multiplicities}
 F(x;t)=\prod_{j=j_\A}^{p}P_j(x;t)^{\ell_j}, \qquad x\in\A,
\end{equation}
and $\sum_{j=j_\A}^{p}n_j\ell_j=m$. In particular, $F$ is polynomial.
\end{lemma}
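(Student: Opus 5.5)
The plan follows the strategy outlined in the introduction: build a parametrized family of elements with prescribed source factors, show that root multiplicities in $F$ are locally constant, and then extend by density and continuity.

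\emph{Parametrized family.} By Lemma~\ref{lem:reduction} and~\eqref{eq:characteristic-spectrum}, $\pi_\A$ is a surjective linear map, so we fix a linear right inverse $\sigma:\widehat\A\to\A$. Set $N:=\sum_{j=1}^p n_j$. Let $\Omega\subseteq\C^N$ be the set of tuples $\lambda=(\lambda_{j,i})_{1\le j\le p,\,1\le i\le n_j}$ of pairwise distinct complex numbers, required to be nonzero when $\A$ is nonunital. This set is the nonvanishing set of a nonzero polynomial, so it is connected by Lemma~\ref{lem:topology}(a). Define
\[
x(\lambda):=\sigma\Bigl(\bigl(\Gamma_{\J_j}\bigl(\textstyle\prod_i(t-\lambda_{j,i})\bigr)\bigr)_{j}\Bigr).
\]
This is a continuous map $\Omega\to\A$ with $P_j(x(\lambda);t)=\prod_i(t-\lambda_{j,i})$ by Proposition~\ref{prop:sections}. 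Hence $\spc_\A(x(\lambda))$ consists of the $N$ distinct values $\lambda_{j,i}$, together with $0$ when $j_\A=0$.

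\emph{Constancy of multiplicities.} For each $\mu\in\{\lambda_{j,i}\}\cup\{0\}$, let $e_\mu(\lambda)$ denote the multiplicity of $\mu$ as a root of $F(x(\lambda))$. The multiplicities sum to $m$. Since the roots of $F(x(\lambda))$ lie in a finite set of continuously varying, pairwise distinct points, we can argue locally. Around $\lambda^0$, choose disjoint small discs about the points. Continuity of $F$ and Rouché's theorem show that the number of roots in each disc is locally constant. Because all roots lie among the centers, this number is exactly $e_\mu$. So each $e_{\lambda_{j,i}}$ and $e_0$ is locally constant on the connected set $\Omega$, hence constant; write $e_{j,i}$ and $e_0$ for these constants. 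Transposing $\lambda_{j,i}$ and $\lambda_{j,i'}$ within the same block $j$ leaves $x(\lambda)$ unchanged. Hence $e_{j,i}=e_{j,i'}=:\ell_j$, and we set $\ell_0:=e_0$. This gives~\eqref{eq:polynomial-multiplicities} on the image $x(\Omega)$, and $\sum_j n_j\ell_j=m$ by comparing degrees.

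\emph{Extension to $\A$.} Let $U\subseteq\A$ be the set of $x$ at which $\prod_{j\ge j_\A}P_j(x;t)$ has simple roots. This set is the nonvanishing set of the discriminant, which is a polynomial in $x$. It is nonzero because $x(\Omega)\subseteq U$, so $U$ is open, dense and connected by Lemma~\ref{lem:topology}(a). On $U$, the same Rouché argument shows that the multiplicity in $F(x)$ of each root of $P_j(x;\cdot)$ is locally constant. To make this precise, the roots must be tracked locally as continuous functions, which is possible since they are simple.

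The main obstacle is that the roots of $P_j$ cannot be labelled globally on $U$. I would handle this by working with the continuous map $x\mapsto F(x)$ and with the polynomial $G(x):=\prod_j P_j(x;\cdot)^{\ell_j}$. The set $\{x\in U:F(x)=G(x)\}$ is closed in $U$ and nonempty, since it contains $x(\Omega)$. It is also open: near any point where $F=G$, the local root multiplicities of $F$ are constant and agree with those of $G$. By connectedness this set equals $U$. Density of $U$ and continuity then give $F=G$ on all of $\A$, so $F$ is polynomial.

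Uniqueness of the $\ell_j$ follows by evaluating at a single $x(\lambda)$, where the roots belonging to different factors are distinct.
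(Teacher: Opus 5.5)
Your proposal is correct and follows the paper's proof essentially step for step. You use the same parametrized family built from the polynomial sections and a linear right inverse of $\pi_\A$, the same transposition argument within each block, the same open--closed argument on the discriminant set $\mathscr U$ via Rouch\'e's theorem followed by density, and the same uniqueness check at a generic point. The only variation is in the first step: you get constancy of the root multiplicities on the parameter space from a local Rouch\'e count. The paper instead applies Lemma~\ref{lem:topology}\textup{(b)} to the finitely many continuous maps $G_{\boldsymbol a}$, whose values are pairwise distinct. Both arguments work.
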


\begin{proof}
If $p=0$, then $\A$ is nonunital and~\eqref{eq:spectral-reduction} gives $\spc_\A(x)=\{0\}$ for every $x\in\A$. Hence $F(x;t)=t^m$, and the claim holds with $\ell_0=m$. Assume that $p\ge1$.

\emph{Step~1: determining the exponents.}
Let
\[
 W:=\bigoplus_{j=1}^{p}\C^{n_j},
\]
and write $z=(\zeta_{j,r})\in W$, with $\zeta_{j,r}\in\C$ for $1\le j\le p$ and $1\le r\le n_j$. Put $\zeta_{0,1}:=0$ when $\A$ is nonunital. Define
\[
 \mathscr E:=\left\{z\in W:
 \zeta_{j,r}\ne\zeta_{k,s}
 \text{ for all distinct }(j,r),(k,s)\right\},
\]
where $j_\A\le j,k\le p$, $1\le r\le n_j$ and $1\le s\le n_k$. This is the nonvanishing set of a product of nonzero linear polynomials: the differences between distinct coordinate functions on $W$ and, when $\A$ is nonunital, the coordinate functions on $W$. Thus $\mathscr E$ is nonempty and connected by Lemma~\ref{lem:topology}\textup{(a)}.

For $j_\A\le j\le p$, define $Q_j:W\to\Pol_{n_j}$ by
\[
 Q_j(z;t):=\prod_{r=1}^{n_j}(t-\zeta_{j,r}),\qquad z\in W.
\]
For $1\le j\le p$, let $\Gamma_{\J_j}:\Pol_{n_j}\to\J_j$ be a polynomial section given by Proposition~\ref{prop:sections}. Using~\eqref{eq:decompositions}, choose a linear right inverse $s_\A:\bigoplus_{j=1}^{p}\J_j\to\A$ of $\pi_\A$, and define $\Theta:W\to\A$ by
\[
 \Theta(z):=
 s_\A\left(\Gamma_{\J_1}(Q_1(z)),\ldots,\Gamma_{\J_p}(Q_p(z))\right),
 \qquad z\in W.
\]
The maps $Q_j$, $j_\A\le j\le p$, and $\Theta$ are polynomial. Since $\pi_\A\circ s_\A$ is the identity, we have
\begin{equation}\label{eq:source-section}
 P_j(\Theta(z);t)
 \stackrel{\eqref{eq:spectral-factors},\,\eqref{eq:section}}{=}Q_j(z;t),
 \qquad z\in W,\quad 1\le j\le p.
\end{equation}
When $\A$ is nonunital, \eqref{eq:source-section} also holds for $j=0$, since both sides are the polynomial $t$.

Consider the finite set
\[
 \Lambda_m:=
 \left\{
 \boldsymbol a=(a_{j,r})\in\prod_{j=j_\A}^{p}\N_0^{n_j}:
 \sum_{j=j_\A}^{p}\sum_{r=1}^{n_j}a_{j,r}=m
 \right\}.
\]
For $\boldsymbol a\in\Lambda_m$, define $G_{\boldsymbol a}:\mathscr E\to\Pol_m$ by
\[
 G_{\boldsymbol a}(z;t):=
 \prod_{j=j_\A}^{p}\prod_{r=1}^{n_j}(t-\zeta_{j,r})^{a_{j,r}},
 \qquad z\in \mathscr E.
\]
These maps are continuous, and their values at each point of $\mathscr E$ are pairwise distinct. By~\eqref{eq:source-section}, the spectral reduction~\eqref{eq:spectral-reduction} and the hypothesis on $F$, the polynomial $F(\Theta(z))$ is one of these values for every $z\in \mathscr E$. Lemma~\ref{lem:topology}\textup{(b)} therefore gives a single array $\boldsymbol a\in\Lambda_m$ such that $F\circ\Theta=G_{\boldsymbol a}$ on $\mathscr E$.

Fix $z\in \mathscr E$, $j\in\{1,\ldots,p\}$ and $r,s\in\{1,\ldots,n_j\}$ with $r<s$. Let $z'\in\mathscr E$ be obtained from $z$ by interchanging $\zeta_{j,r}$ and $\zeta_{j,s}$. The tuple $(Q_1(z),\ldots,Q_p(z))$ is unchanged, so $\Theta(z')=\Theta(z)$ and $G_{\boldsymbol a}(z')=G_{\boldsymbol a}(z)$. Comparing the multiplicity of $\zeta_{j,r}$ in these two polynomials gives $a_{j,r}=a_{j,s}$. Set $\ell_j:=a_{j,1}$ for $1\le j\le p$, and set $\ell_0:=a_{0,1}$ when $\A$ is nonunital. Then
\[
 \sum_{j=j_\A}^{p}n_j\ell_j=m.
\]
Consequently, the formula
\begin{equation}\label{eq:candidate-multiplicities}
 H(x;t):=\prod_{j=j_\A}^{p}P_j(x;t)^{\ell_j},
 \qquad x\in\A,
\end{equation}
defines a polynomial map $H:\A\to\Pol_m$, and
\begin{equation}\label{eq:model-multiplicities}
 F(\Theta(z);t)=\prod_{j=j_\A}^{p}Q_j(z;t)^{\ell_j}
 \stackrel{\eqref{eq:source-section},\,\eqref{eq:candidate-multiplicities}}{=}
 H(\Theta(z);t),\qquad z\in \mathscr E.
\end{equation}

\emph{Step~2: extending the formula.}
Set $d:=\sum_{j=j_\A}^{p}n_j\in\N$, and define a polynomial map $S:\A\to\Pol_d$ by
\[
 S(x;t):=\prod_{j=j_\A}^{p}P_j(x;t),\qquad x\in\A.
\]
For $x\in\A$, let $\lambda_1,\ldots,\lambda_d\in\C$ be the roots of $S(x;\cdot)$, listed with multiplicity. Its discriminant is
\[
 \disc S(x;\cdot):=\prod_{1\le r<s\le d}(\lambda_r-\lambda_s)^2.
\]
This symmetric expression is polynomial in the coefficients of $S(x;\cdot)$ and is nonzero exactly when $S(x;\cdot)$ has no repeated roots. Set
\[
 \mathscr U:=\{x\in\A:\disc S(x;\cdot)\ne0\}.
\]
By~\eqref{eq:source-section}, $\Theta(\mathscr E)\subseteq \mathscr U$, so the polynomial function $x\mapsto\disc S(x;\cdot)$ on $\A$ is not identically zero. Lemma~\ref{lem:topology}\textup{(a)} shows that $\mathscr U$ is open, dense and connected.

Consider
\[
 \mathscr C:=\{x\in \mathscr U:F(x)=H(x)\}.
\]
This set is nonempty by~\eqref{eq:model-multiplicities} and closed in $\mathscr U$ by continuity.
To prove that $\mathscr C$ is open in $\mathscr U$, fix $x_0\in \mathscr C$. For each $\lambda\in Z(S(x_0;\cdot))$, choose a closed disk $D_\lambda\subseteq\C$ of positive radius centered at $\lambda$, with the disks pairwise disjoint. Since $x_0\in \mathscr U$, each disk contains exactly one simple root of $S(x_0;\cdot)$. The roots of $F(x_0)$ and $H(x_0)$ belong to this same set, so none of these three polynomials vanishes on the boundary circles. Coefficient continuity and compactness of these circles give an open neighborhood $\mathscr V\subseteq \mathscr U$ of $x_0$ such that, for every $R\in\{S,F,H\}$ and every $y\in \mathscr V$,
\[
 |R(y;\zeta)-R(x_0;\zeta)|<|R(x_0;\zeta)|,
 \qquad \zeta\in\partial D_\lambda,\quad
 \lambda\in Z(S(x_0;\cdot)).
\]
Rouch\'e's theorem on each boundary circle shows that, for every $y\in \mathscr V$, each disk contains exactly one simple root of $S(y;\cdot)$, and the root counts of $F(y)$ and $H(y)$ in each disk equal their respective counts at $x_0$, with multiplicities. These counts agree because $F(x_0)=H(x_0)$. Since the number of disks equals the degree of $S(y;\cdot)$,
all its roots lie in the disks. Every root of $F(y)$ and $H(y)$ is a root of $S(y;\cdot)$, by the hypothesis on $F$,~\eqref{eq:spectral-reduction} and~\eqref{eq:candidate-multiplicities}. Thus $F(y)$ and $H(y)$ have the same roots with the same multiplicities, and monicity gives $F(y)=H(y)$. Therefore $\mathscr V\subseteq \mathscr C$, proving openness.

Connectedness gives $\mathscr C=\mathscr U$, and density and continuity yield $F=H$ on all of $\A$. Thus~\eqref{eq:polynomial-multiplicities} holds and $F$ is polynomial. At any point of $\mathscr U$, the source factors have pairwise disjoint simple roots, so comparison of root multiplicities proves uniqueness of the exponents.
\end{proof}

\begin{proof}[Proof of Theorem~\ref{thm:classification}]
\textup{(a)} Suppose that $\A$ is unital and $\B$ is nonunital. Then $\spc_\A(1_\A)=\{1\}$, whereas $0\in\spc_\B(y)$ for every $y\in\B$ by~\eqref{eq:zero-in-spectrum}. Hence no map $\A\to\B$ is spectrum-shrinking.

\textup{(b)} Assume that $\A$ is nonunital or $\B$ is unital, and let $\phi:\A\to\B$ be continuous and spectrum-shrinking. For $1\le k\le q$, the map $x\mapsto T_k(\phi(x);\cdot)$ from $\A$ to $\Pol_{m_k}$ is continuous. By Lemma~\ref{lem:reduction} and the hypothesis on $\phi$,
\[
 Z(T_k(\phi(x);\cdot)) \subseteq\spc_\B(\phi(x)) \subseteq\spc_\A(x),
 \qquad x\in\A.
\]
Applying Lemma~\ref{lem:multiplicities} for each $k$ gives~\eqref{eq:multiplicities} with uniquely determined exponents $\ell_j^k\in\N_0$. Comparing degrees gives~\eqref{eq:rank-equations}.

Conversely, suppose that a continuous map $\phi:\A\to\B$ satisfies~\eqref{eq:multiplicities} with nonnegative integer exponents. Fix $x\in\A$. By~\eqref{eq:multiplicities} and~\eqref{eq:spectral-reduction}, every root of each $T_k(\phi(x);\cdot)$ belongs to $\spc_\A(x)$. If $\B$ is nonunital, then $\A$ is also nonunital by assumption, so $0\in\spc_\A(x)$. Lemma~\ref{lem:reduction}, applied to $\B$, therefore gives $\spc_\B(\phi(x))\subseteq\spc_\A(x)$. Thus $\phi$ is spectrum-shrinking.

Now let $\ell_j^k\in\N_0$ satisfy~\eqref{eq:rank-equations} and put
\[
 Q_k(x;t):=\prod_{j=j_\A}^{p}P_j(x;t)^{\ell_j^k},  \qquad x\in\A,\quad 1\le k\le q.
\]
By~\eqref{eq:rank-equations}, these formulas define polynomial maps $Q_k:\A\to\Pol_{m_k}$. For $1\le k\le q$, let $\Gamma_{\K_k}:\Pol_{m_k}\to\K_k$ be a polynomial section given by Proposition~\ref{prop:sections}. Choose a linear right inverse $s_\B:\bigoplus_{k=1}^{q}\K_k\to\B$ of $\pi_\B$ and define $\phi:\A\to\B$ by
\[
 \phi(x):= s_\B\left(
 \Gamma_{\K_1}(Q_1(x;\cdot)),\ldots, \Gamma_{\K_q}(Q_q(x;\cdot))
 \right), \qquad x\in\A,
\]
with $\phi:=0$ when $q=0$. Then $\phi$ is polynomial, and
\[
 T_k(\phi(x);t)
 \stackrel{\eqref{eq:spectral-factors}}{=}
 \chi_{\K_k}\left(
 \Gamma_{\K_k}(Q_k(x;\cdot));t
 \right)
 \stackrel{\eqref{eq:section}}{=}Q_k(x;t),
 \qquad x\in\A,\quad 1\le k\le q.
\]
Thus $\phi$ satisfies~\eqref{eq:multiplicities} and is spectrum-shrinking by the converse proved above. Together with the necessity of~\eqref{eq:rank-equations}, this establishes the existence criterion and polynomial realization in~\textup{(b)}.

\textup{(c)} Let $\phi:\A\to\B$ be continuous and spectrum-shrinking, with exponents $\ell_j^k$ as in~\textup{(b)}. Suppose that~\eqref{eq:coverage} holds and that $\sum_{k=1}^{q}\ell_0^k>0$ when $\A$ is nonunital and $\B$ is unital. For every $x\in\A$, \eqref{eq:multiplicities} and~\eqref{eq:coverage} give
\[
 \bigcup_{j=1}^{p}Z(P_j(x;\cdot))
 \subseteq
 \bigcup_{k=1}^{q}Z(T_k(\phi(x);\cdot)).
\]
If $\A$ is nonunital, then $0\in\spc_\B(\phi(x))$ as well, since either $\B$ is nonunital or the additional positivity condition makes some target factor divisible by $t$. Applying Lemma~\ref{lem:reduction} to $\A$ and $\B$ therefore gives $\spc_\A(x)\subseteq\spc_\B(\phi(x))$. The reverse inclusion holds because $\phi$ is spectrum-shrinking, so $\phi$ is spectrum-preserving.

Conversely, suppose that $\phi$ is spectrum-preserving. Choose pairwise distinct nonzero scalars $\lambda_j\in\C$, $1\le j\le p$, and use the surjectivity of $\pi_\A$ to choose $x\in\A$ such that
\[
 \pi_{\A,j}(x)=\lambda_j1_{\J_j},
 \qquad 1\le j\le p.
\]
Then
\[
 P_j(x;t)
 \stackrel{\eqref{eq:spectral-factors},\,
 \eqref{eq:scalar-characteristic}}{=}
 (t-\lambda_j)^{n_j},
 \qquad 1\le j\le p.
\]
Since the $\lambda_j$ are distinct and nonzero, \eqref{eq:multiplicities} and Lemma~\ref{lem:reduction} give
\[
 \lambda_j\in\spc_\B(\phi(x))
 \quad\iff\quad
 \sum_{k=1}^{q}\ell_j^k>0,
 \qquad 1\le j\le p.
\]
Each $\lambda_j$ belongs to $\spc_\A(x)$, so spectrum preservation forces~\eqref{eq:coverage}. If $\A$ is nonunital and $\B$ is unital, the same formulas give
\[
 0\in\spc_\B(\phi(x))
 \quad\iff\quad
 \sum_{k=1}^{q}\ell_0^k>0.
\]
Since $0\in\spc_\A(x)$, the additional positivity condition follows.

The existence criterion and polynomial realization in~\textup{(c)} now follow from~\textup{(a)}, \textup{(b)} and the equivalence just proved.
\end{proof}

\section{Consequences and examples}\label{sec:consequences}

We now apply Theorem~\ref{thm:classification} to automatic spectrum preservation and scalar spectral selection, specialize the results to associative algebras, and conclude with examples illustrating the roles of symmetrization, continuity and the nilradical.

\subsection{Automatic spectrum preservation}\label{subsec:automatic}

We first determine when every continuous spectrum-shrinking map is automatically spectrum-preserving, and when spectrum-preserving maps exist from nonunital sources.

\begin{corollary}\label{cor:automatic}
Let $\A$ and $\B$ be unital finite-dimensional complex power-associative algebras, with the Jordan ranks in~\eqref{eq:spectral-factors}. Then every continuous spectrum-shrinking map $\A\to\B$ is spectrum-preserving if and only if for every $1\le j\le p$ there is $1\le k\le q$ such that
\begin{equation}\label{eq:omission-criterion}
 m_k\notin\left\{\sum_{\substack{1\le j'\le p\\j'\ne j}}n_{j'}\ell_{j'}:\ell_{j'}\in\N_0\text{ for }1\le j'\le p,\ j'\ne j\right\}.
\end{equation}
\end{corollary}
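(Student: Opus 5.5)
By Theorem~\ref{thm:classification}, with $\A$ and $\B$ unital (so $j_\A=1$ and there is no factor $P_0$), the continuous spectrum-shrinking maps $\A\to\B$ correspond to nonnegative integer solutions $(\ell_j^k)$ of~\eqref{eq:rank-equations}. Part~(b) shows that every continuous spectrum-shrinking map yields such a solution, and that every solution is realized by a polynomial spectrum-shrinking map. By part~(c), the map is spectrum-preserving exactly when~\eqref{eq:coverage} holds. The statement therefore reduces to a purely arithmetic claim: every nonnegative integer solution of~\eqref{eq:rank-equations} satisfies~\eqref{eq:coverage} if and only if~\eqref{eq:omission-criterion} holds for every $j$.

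For the reduction we use both directions of~(b). If some solution violates~\eqref{eq:coverage}, the polynomial map realizing it is continuous and spectrum-shrinking, and by~(c) it is not spectrum-preserving. Conversely, if some continuous spectrum-shrinking map fails to be spectrum-preserving, then its uniquely determined exponents form a solution violating~\eqref{eq:coverage}. Hence ``every continuous spectrum-shrinking map preserves spectra'' is equivalent to ``no nonnegative solution of~\eqref{eq:rank-equations} has $\ell_j^k=0$ for all $k$, for some $j$''.

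For the arithmetic claim we argue both directions separately. Suppose~\eqref{eq:omission-criterion} fails for some $j$. Then for every $k$ the integer $m_k$ is a nonnegative combination $\sum_{j'\ne j}n_{j'}\ell_{j'}^k$. Setting $\ell_j^k:=0$ for all $k$ gives a solution of~\eqref{eq:rank-equations} violating~\eqref{eq:coverage}. The choices for different $k$ are independent, since the equations are decoupled in $k$. Conversely, let $(\ell_j^k)$ be a solution violating~\eqref{eq:coverage} at some index $j$, so that $\ell_j^k=0$ for all $k$. Then every $m_k$ lies in the set appearing in~\eqref{eq:omission-criterion}, and so~\eqref{eq:omission-criterion} fails for this $j$.

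There is one point to check. When $q=0$ (that is, $\B$ has trivial quotient), $\B$ unital forces $\widehat\B\ne\{0\}$, so $q\ge1$ and this case does not arise. Edge cases with $p\ge1$ are handled by the same argument. The decoupling across $k$ is the only substantive observation; beyond that the proof is bookkeeping, and no step should pose a real obstacle.
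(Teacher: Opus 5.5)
Your proposal is correct and takes essentially the same route as the paper. You use Theorem~\ref{thm:classification}\textup{(b)} and \textup{(c)}, including the polynomial realization, to reduce the claim to ``every nonnegative integer solution of \eqref{eq:rank-equations} satisfies \eqref{eq:coverage},'' and then conclude using the fact that the equations for different $k$ involve disjoint sets of unknowns; your check that $q\ge1$ because $\B$ is unital is a harmless detail the paper leaves implicit.
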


\begin{proof}
By Theorem~\ref{thm:classification}\textup{(b)} and~\textup{(c)}, including its polynomial realization, every continuous spectrum-shrinking map $\A\to\B$ is spectrum-preserving if and only if every nonnegative integer solution of~\eqref{eq:rank-equations} satisfies~\eqref{eq:coverage}. For fixed $1 \le j \le p$, a solution array omitting the $j$th source summand exists exactly when $m_k$ belongs to the set in~\eqref{eq:omission-criterion} for every $1\le k\le q$, since the equations for different target summands involve disjoint sets of unknowns. Negating the existence of such an array for some $j$ gives the stated criterion.
\end{proof}

Corollary~\ref{cor:automatic} includes the case in which no continuous spectrum-shrinking map exists: both its universal assertion and its rank criterion then hold. If $\A$ is unital and $\B$ is nonunital, there are no spectrum-shrinking maps $\A\to\B$ by Theorem~\ref{thm:classification}\textup{(a)}. Nonunital sources are treated next.

\begin{corollary}\label{cor:nonunital}
Let $\A$ and $\B$ be finite-dimensional complex power-associative algebras, with the Jordan ranks in~\eqref{eq:spectral-factors}, and suppose that $\A$ is nonunital.
\begin{enumerate}[label=\textup{(\alph*)}]
\item A continuous spectrum-preserving map $\A\to\B$ exists if and only if there is a map $\kappa:\{1,\ldots,p\}\to\{1,\ldots,q\}$ such that
\[
 \sum_{\substack{1\le j\le p\\\kappa(j)=k}}n_j\le m_k,\qquad 1\le k\le q,
\]
with strict inequality for at least one $k\in\{1,\ldots,q\}$ when $\B$ is unital.
\item Every spectrum-shrinking map $\A\to\B$ is spectrum-preserving if and only if $\A$ is nil.
\end{enumerate}
\end{corollary}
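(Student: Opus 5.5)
Both parts reduce to Theorem~\ref{thm:classification} applied with $j_\A=0$. Since $\A$ is nonunital, the source indices run over $0\le j\le p$ with $n_0=1$ and $P_0(x;t)=t$, and the case in Theorem~\ref{thm:classification}\textup{(a)} cannot occur.

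For \textup{(a)}, Theorem~\ref{thm:classification}\textup{(c)} says a continuous spectrum-preserving map exists if and only if there are $\ell_j^k\in\N_0$, $0\le j\le p$, $1\le k\le q$, satisfying three conditions:
\begin{enumerate}[label=\textup{(\roman*)}]
\item the rank equations $\ell_0^k+\sum_{j=1}^p n_j\ell_j^k=m_k$ for every $k$;
\item the coverage condition~\eqref{eq:coverage} for $1\le j\le p$;
\item $\sum_k\ell_0^k>0$ when $\B$ is unital.
\end{enumerate}
I will translate such arrays to maps $\kappa$ and back.

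Given a valid array, coverage lets me choose, for each $1\le j\le p$, some $\kappa(j)$ with $\ell_j^{\kappa(j)}>0$. Then for every $k$,
\[
\sum_{\kappa(j)=k}n_j\le\sum_{j=1}^p n_j\ell_j^k=m_k-\ell_0^k\le m_k .
\]
When $\B$ is unital, some $k$ has $\ell_0^k>0$, and for that $k$ the inequality is strict.

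Conversely, given $\kappa$, I set $\ell_j^k:=1$ if $\kappa(j)=k$ and $\ell_j^k:=0$ otherwise, for $1\le j\le p$. I then set $\ell_0^k:=m_k-\sum_{\kappa(j)=k}n_j$, which lies in $\N_0$ by hypothesis. The rank equations hold by construction, and coverage holds because each $j$ has $\ell_j^{\kappa(j)}=1$. The extra positivity condition in the unital case is exactly the strict inequality at some $k$.

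The degenerate cases need a check but cause no trouble. If $q=0$ and $p\ge1$, no $\kappa$ exists, and also no array satisfies coverage. If $p=0$, the empty $\kappa$ works and the array reduces to $\ell_0^k=m_k$. This is straightforward bookkeeping.

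For \textup{(b)}, note that the statement concerns all spectrum-shrinking maps, not only continuous ones, so I argue directly. If $\A$ is nil, then each $\mu_x$ is a power of $t$, so $\spc_\A(x)=\{0\}$. Any spectrum-shrinking $\phi$ then has $\emptyset\ne\spc_\B(\phi(x))\subseteq\{0\}$, which forces equality.

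If $\A$ is not nil, I use the zero map $\A\to\B$ as a counterexample. Its value $0\in\B$ has minimal polynomial $t$, computed in $\B^1$, where $1\ne0$; so $\spc_\B(0)=\{0\}$. This set lies in every $\spc_\A(x)$ by~\eqref{eq:zero-in-spectrum}, so the zero map is spectrum-shrinking. Now pick a non-nilpotent $x\in\A$. Then $\mu_x$ is not a power of $t$, since otherwise $x^r=\mu_x(x)=0$ for some $r$. Hence $\spc_\A(x)$ contains a nonzero scalar, and the zero map is not spectrum-preserving.

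The only delicate point is remembering that the zero algebra and nonunital targets still give $\spc_\B(0)=\{0\}$ via the unitization; beyond that I expect no real obstacle.
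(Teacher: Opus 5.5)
Your proposal is correct and follows the paper's proof essentially step for step. Part (a) uses Theorem~\ref{thm:classification}\textup{(c)} with the same translation between covering arrays and maps $\kappa$: choose $\kappa(j)$ with $\ell_j^{\kappa(j)}>0$, and conversely set $\ell_j^k=1$ exactly when $\kappa(j)=k$ and $\ell_0^k=m_k-\sum_{\kappa(j)=k}n_j$. Part (b) uses the zero map as the witness when $\A$ is not nil, and nonemptiness of spectra when $\A$ is nil; your separate check of the degenerate cases $p=0$ or $q=0$ is harmless but already covered by the general argument.
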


\begin{proof}
\textup{(a)} Here $j_\A=0$. By Theorem~\ref{thm:classification}\textup{(c)}, a continuous spectrum-preserving map $\A\to\B$ exists if and only if~\eqref{eq:rank-equations} has a solution $(\ell_j^k)$ in nonnegative integers satisfying~\eqref{eq:coverage} and, when $\B$ is unital, $\sum_{k=1}^{q}\ell_0^k>0$. Given such a solution, \eqref{eq:coverage} allows us to choose, for each $1\le j\le p$, some $\kappa(j)\in\{1,\ldots,q\}$ with $\ell_j^{\kappa(j)}>0$. We obtain
\[
 \sum_{\substack{1\le j\le p\\\kappa(j)=k}}n_j\le\sum_{j=1}^{p}n_j\ell_j^k\stackrel{\eqref{eq:zero-factor},\,\eqref{eq:rank-equations}}{=}m_k-\ell_0^k\le m_k,\qquad 1\le k\le q,
\]
with strict inequality for every $k$ with $\ell_0^k>0$, and such a $k$ exists when $\B$ is unital. Conversely, given $\kappa$ as in~\textup{(a)}, set $\ell_j^k:=1$ if $\kappa(j)=k$ and $\ell_j^k:=0$ otherwise, for $1\le j\le p$ and $1\le k\le q$, and
\[
 \ell_0^k:=m_k-\sum_{\substack{1\le j\le p\\\kappa(j)=k}}n_j,\qquad 1\le k\le q.
\]
These are nonnegative integers satisfying~\eqref{eq:rank-equations} and~\eqref{eq:coverage}, and $\sum_{k=1}^{q}\ell_0^k>0$ when $\B$ is unital, so Theorem~\ref{thm:classification}\textup{(c)} gives a polynomial spectrum-preserving map $\A\to\B$.

\textup{(b)} For every $x\in\A$, we have $0\in\spc_\A(x)$ by~\eqref{eq:zero-in-spectrum}, while $\spc_\B(0)=\{0\}$ because $\mu_0=t$. Hence the zero map $\A\to\B$ is spectrum-shrinking. If $\A$ is not nil, choose $x\in\A$ that is not nilpotent. Then $\mu_x$ is not a power of $t$, so it has a nonzero root, and the zero map does not preserve $\spc_\A(x)$. If $\A$ is nil, then every $\mu_x$, $x\in\A$, is a power of $t$, so $\spc_\A(x)=\{0\}$; since every spectrum is nonempty, every spectrum-shrinking map $\A\to\B$ is spectrum-preserving.
\end{proof}

\subsection{Scalar spectral selectors}\label{subsec:selectors}

We use the degree-one case of Lemma~\ref{lem:multiplicities} to classify the continuous spectral selectors on $\A$. A \emph{spectral selector} on $\A$ is a map $f:\A\to\C$ satisfying $f(x)\in\spc_\A(x)$ for every $x\in\A$.

By a result of Tour\'e, Schulz and Brits~\cite[Corollary~2.3]{TSB2017}, every continuous multiplicative spectral selector on a unital finite-dimensional complex associative algebra is a character. Similarly, the result of Escolano, Peralta and Villena~\cite[Theorem~3.9]{EPV2024} implies that every continuous spectral selector $f:\J\to\C$ on a unital finite-dimensional complex Jordan algebra $\J$ satisfying
\[
 f(U_x(y))=f(x)^2f(y),\qquad x,y\in\J,
\]
is a character, where $U_x$ is the quadratic operator defined by~\eqref{eq:quadratic-operator}. Indeed, $\J$ admits a complete submultiplicative norm, and its Jordan spectrum is $\spc_\J$ by Subsection~\ref{subsec:calculus}. Their theorem gives a character $\psi:\J\to\C$ such that $f=\psi$ on the connected component of $1_\J$ in the set of invertible elements. By~\eqref{eq:characteristic-spectrum}, the invertible elements form the nonvanishing set of the polynomial function $x\mapsto\chi_\J(x;0)$ from $\J$ to $\C$. Its value at $1_\J$ is nonzero by~\eqref{eq:scalar-characteristic}, so Lemma~\ref{lem:topology}\textup{(a)} shows that this set is connected and dense in $\J$. Hence $f=\psi$ on $\J$ by continuity. The following corollary requires only continuity and the spectral condition.

If a simple summand $\J_j$ in~\eqref{eq:decompositions} has Jordan rank one, then every element of $\J_j$ has a linear minimal polynomial, so $\J_j=\C 1_{\J_j}$, and we identify $\J_j$ with $\C$ via $\lambda 1_{\J_j}\mapsto\lambda$ for $\lambda\in\C$.

\begin{corollary}\label{cor:selectors}
Let $\A$ be a finite-dimensional complex power-associative algebra, with the notation of~\eqref{eq:spectral-factors}. The continuous spectral selectors $f:\A\to\C$ are exactly the coordinate maps $\pi_{\A,j}:\A\to\J_j=\C$ with $1\le j\le p$ and $n_j=1$, together with the zero map when $\A$ is nonunital. Their nonzero members are precisely the characters of $\A^+$, and they are unital when $\A$ is unital.
\end{corollary}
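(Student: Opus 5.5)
We apply Lemma~\ref{lem:multiplicities} with $m=1$. Given a continuous spectral selector $f:\A\to\C$, set $F(x;t):=t-f(x)$. This defines a continuous map $F:\A\to\Pol_1$ with $Z(F(x))=\{f(x)\}\subseteq\spc_\A(x)$. The lemma gives unique exponents $\ell_j\in\N_0$ with $\sum_{j=j_\A}^p n_j\ell_j=1$ and
\[
 t-f(x)=\prod_{j=j_\A}^{p}P_j(x;t)^{\ell_j},\qquad x\in\A.
\]
Since the $n_j$ are positive, exactly one index $j$ has $\ell_j=1$, and this index satisfies $n_j=1$; all other exponents vanish.

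There are two cases. If $j=0$, which requires $\A$ to be nonunital, then $t-f(x)=t$, so $f=0$. If $1\le j\le p$, then $\J_j$ has Jordan rank one and is identified with $\C$. Under this identification, $\chi_{\J_j}(\lambda;t)=t-\lambda$ by~\eqref{eq:scalar-characteristic}, so $P_j(x;t)=t-\pi_{\A,j}(x)$ and hence $f=\pi_{\A,j}$.

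For the converse, the zero map is a selector when $\A$ is nonunital by~\eqref{eq:zero-in-spectrum}. For $n_j=1$, Lemma~\ref{lem:reduction} shows that $\pi_{\A,j}(x)$ is a root of $P_j(x;\cdot)$ and therefore lies in $\spc_\A(x)$. Both maps are continuous, since they are linear.

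It remains to identify the nonzero selectors with the characters of $\A^+$. Each $\pi_{\A,j}:\A^+\to\J_j=\C$ is a surjective homomorphism, hence a character. When $\A$ is unital, $\pi_\A$ is unital, so $\pi_{\A,j}(1)=1$.

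Conversely, let $\psi:\A^+\to\C$ be a character. Every element of $\Nil(\A^+)$ is nilpotent, and $\psi$ preserves powers, so $\psi$ vanishes on $\Nil(\A^+)$. Thus $\psi$ factors through a character $\hat\psi$ of $\widehat\A=\bigoplus_j\J_j$. Because $\psi$ preserves powers, $\psi(x)^k=\psi(x^k)$, which gives $\mu_x(\psi(x))=0$ when $\A$ is nonunital. In the unital case the same identity holds once we check $\psi(1)=1$; this follows from $\psi(1)^2=\psi(1)$ and $\psi\neq0$, since $\psi(x)=\psi(1\circ x)=\psi(1)\psi(x)$. So every character is already a spectral selector, and it is continuous because it is linear. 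The first part of the corollary then shows that it equals some $\pi_{\A,j}$ with $n_j=1$, since it is nonzero. This closes the argument without having to classify the characters of the semisimple Jordan algebra by hand.

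The step needing the most care is the converse direction $\psi\mapsto$ selector, specifically the unital case, where $\psi(1)=1$ must be established before $\mu_x(\psi(x))=0$ can be used. The rest is a direct application of Lemma~\ref{lem:multiplicities} with $m=1$.
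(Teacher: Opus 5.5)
Your proposal is correct and follows essentially the same route as the paper: you apply Lemma~\ref{lem:multiplicities} with $m=1$ to $F(x;t)=t-f(x)$ and use the degree count to force a single rank-one factor or the factor $t$. For characters you normalize $\psi(1)=1$, note that $\mu_x$ annihilates $\psi(x)$ (the paper's~\eqref{eq:hom-shrinking} with $\B=\C$), and reduce to the first part; your observation that $\psi$ vanishes on $\Nil(\A^+)$ is harmless but never used, so it can be dropped.
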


\begin{proof}
Let $f:\A\to\C$ be a continuous spectral selector. Apply Lemma~\ref{lem:multiplicities} to the continuous map $F:\A\to\Pol_1$ defined by $F(x;t):=t-f(x)$ for $x\in\A$, for which $Z(F(x))=\{f(x)\}\subseteq\spc_\A(x)$. Since $\sum_{j=j_\A}^{p}n_j\ell_j=1$ and $n_j\ge1$, there is a unique $j_0\in\{j_\A,\ldots,p\}$ such that $\ell_{j_0}=1$, all other exponents vanish, and $n_{j_0}=1$. If $j_0\ge1$, then
\[
 t-f(x)
 \stackrel{\eqref{eq:polynomial-multiplicities}}{=}P_{j_0}(x;t)
 \stackrel{\eqref{eq:spectral-factors},\,\eqref{eq:scalar-characteristic}}{=}
 t-\pi_{\A,j_0}(x),
 \qquad x\in\A,
\]
so $f=\pi_{\A,j_0}$. If $j_0=0$, then $\A$ is nonunital and $t-f(x)=t$ for $x\in\A$, so $f=0$.

Conversely, let $1\le j\le p$ with $n_j=1$. For $x\in\A$, \eqref{eq:scalar-characteristic} gives $P_j(x;t)=t-\pi_{\A,j}(x)$, so $\pi_{\A,j}(x)\in\spc_\A(x)$ by~\eqref{eq:spectral-reduction}. If $\A$ is nonunital, then $0\in\spc_\A(x)$ for $x\in\A$ by~\eqref{eq:spectral-reduction}, so the zero map has the property as well. All these maps are linear and therefore continuous.

Each $\pi_{\A,j}$ with $n_j=1$ is a character of $\A^+$ and is unital when $\A$ is unital. Conversely, let $\psi:\A^+\to\C$ be a character. It is linear, hence continuous. If $\A$ is unital, then $\psi(1_\A)\psi(x)=\psi(x)$ for every $x\in\A$, and $\psi\ne0$ gives $\psi(1_\A)=1$. In either case, \eqref{eq:hom-shrinking}, applied with $\B=\C$, gives $\psi(x)\in\spc_\A(x)$ for every $x\in\A$, and the first part of the proof identifies $\psi$ with a rank-one coordinate map.
\end{proof}

\subsection{Associative algebras}\label{subsec:associative}

For associative algebras, the semisimple Jordan quotient is the symmetrization of the classical semisimple quotient. This identifies the data of Theorem~\ref{thm:classification} with those of~\cite{GT2025} and settles the question left open there.

\begin{lemma}\label{lem:associative}
Let $\A$ be a unital finite-dimensional complex associative algebra, with the Jordan ranks in~\eqref{eq:spectral-factors}, and let $\rad(\A)$ denote its Jacobson radical. Then $\Nil(\A^+)=\rad(\A)$, and the maximal ideals of $\A$ may be labeled $\mathcal M_1,\ldots,\mathcal M_p$ so that
\[
 \A/\mathcal M_j\cong M_{n_j}(\C),\qquad 1\le j\le p.
\]
\end{lemma}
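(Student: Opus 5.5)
We prove two inclusions between $\Nil(\A^+)$ and $\rad(\A)$, and then read off the simple summands of $\widehat\A$ from Wedderburn's theorem.

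\emph{Step 1: $\rad(\A)\subseteq\Nil(\A^+)$.} In a finite-dimensional associative algebra, $\rad(\A)$ is a nilpotent two-sided ideal. In particular each of its elements is nilpotent, and symmetrization preserves powers, so these elements are nilpotent in $\A^+$ as well. The ideal is also an ideal of $\A^+$, since $x\circ r=\tfrac12(xr+rx)\in\rad(\A)$ for $x\in\A$ and $r\in\rad(\A)$. Thus $\rad(\A)$ is a nil ideal of $\A^+$, and by maximality of the nilradical it is contained in $\Nil(\A^+)$.

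\emph{Step 2: $\Nil(\A^+)\subseteq\rad(\A)$.} By Wedderburn's theorem,
\[
 \A/\rad(\A)\cong\bigoplus_{j=1}^{p'}M_{n'_j}(\C).
\]
Symmetrizing and using that the quotient map is a homomorphism $\A^+\to(\A/\rad(\A))^+$, the image of $\Nil(\A^+)$ is a nil ideal of $\bigoplus_j M_{n'_j}(\C)^+$. It therefore suffices to show that this symmetrized semisimple algebra has zero nilradical.

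Each $M_n(\C)^+$ is simple as a Jordan algebra. This is either the known simplicity from the classification in Theorem~\ref{thm:Jordan-models}, or it follows from a direct argument: a nonzero Jordan ideal contains a nonzero element, and Jordan multiplying by matrix units produces a matrix unit $E_{11}$. That direct argument needs some care but is standard. Since $I_n$ is not nilpotent, the nilradical of $M_n(\C)^+$ is a proper ideal and hence zero.

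For a finite direct sum of such algebras, let $\mathcal I$ be a nil ideal. Multiplying by the central idempotents $e_j$ (the identities of the summands), which satisfy $e_j\circ x=$ the $j$th component of $x$, shows that each component of $\mathcal I$ is a nil ideal of $M_{n'_j}(\C)^+$, hence zero. So $\Nil(\A^+)\subseteq\rad(\A)$, and equality holds.

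\emph{Step 3: identifying the summands.} From Steps~1 and~2,
\[
 \widehat\A=\A^+/\rad(\A)\cong\bigoplus_j M_{n'_j}(\C)^+,
\]
a decomposition into simple Jordan algebras of ranks $n'_j$ by Section~\ref{subsec:models}. The decomposition of a semisimple Jordan algebra into simple ideals is unique: the simple ideals are exactly the minimal ideals, and any ideal of a unital direct sum is the sum of its components via the central idempotents. So, after relabeling, $p'=p$ and $\J_j\cong M_{n'_j}(\C)^+$, which gives $n_j=n'_j$. The maximal ideals of $\A$ all contain $\rad(\A)$, and they are exactly the kernels of the projections of $\A$ onto the simple summands $M_{n_j}(\C)$. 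Labeling these kernels $\mathcal M_j$ compatibly gives $\A/\mathcal M_j\cong M_{n_j}(\C)$.

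The main obstacle is Step~2, in particular confirming that $M_n(\C)^+$ has no nonzero nil Jordan ideals and that ideals of the direct sum split along the summands. We will handle both with the central idempotents $e_j$, citing Jordan simplicity of $M_n(\C)^+$ from~\cite{McCrimmon2004}.
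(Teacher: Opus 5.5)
Your proposal is correct and follows essentially the same route as the paper. The paper likewise uses nilpotency of $\rad(\A)$ for one inclusion and the simplicity of the symmetrized matrix summands (a nil ideal cannot contain an identity) for the other. It then identifies the simple summands of $\widehat\A$ as the minimal ideals, extracted by the summand identities, and matches their ranks with the matrix sizes.
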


\begin{proof}
By Wedderburn's structure theorem, $\A/\rad(\A)$ is the direct sum of the quotients of $\A$ by its maximal ideals, and these quotients are full matrix algebras; see~\cite[\S\,2.7]{Bresar2025}. The radical $\rad(\A)$ is nilpotent, so it is a nil ideal of $\A^+$ and $\rad(\A)\subseteq\Nil(\A^+)$. The quotient $\A^+/\rad(\A)\cong(\A/\rad(\A))^+$ is a direct sum of full matrix Jordan algebras, each simple and unital by Section~\ref{subsec:models}.

The image of $\Nil(\A^+)$ in each simple summand of this quotient is a nil ideal. It cannot contain the identity of the corresponding summand, so it is zero by simplicity. Hence $\Nil(\A^+)\subseteq\rad(\A)$, proving equality.

In a finite direct sum of simple unital Jordan algebras, the summands are precisely the minimal nonzero ideals: multiplication by the identity of a summand extracts its coordinate. Consequently, the simple summands of $\widehat\A$ are, up to order and isomorphism, the symmetrizations of the simple associative quotients. Their Jordan ranks are the corresponding matrix sizes by Section~\ref{subsec:models}, so the maximal ideals can be labeled as asserted.
\end{proof}

\begin{corollary}\label{cor:associative}
Let $\A$ and $\B$ be unital finite-dimensional complex associative algebras, with the notation of~\eqref{eq:spectral-factors}. Then every continuous spectrum-shrinking map $\A\to\B$ is spectrum-preserving if and only if every solution of~\eqref{eq:rank-equations} in nonnegative integers satisfies~\eqref{eq:coverage}. In particular, the converse of~\cite[Theorem~1.1(c)]{GT2025} holds for every unital finite-dimensional complex associative source $\A$.
\end{corollary}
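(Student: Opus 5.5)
The corollary has two parts: an equivalence, and the statement that the converse of \cite[Theorem~1.1(c)]{GT2025} holds. The equivalence should follow directly from Theorem~\ref{thm:classification}, with no new analytic input, exactly as in the first step of the proof of Corollary~\ref{cor:automatic}.

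For the equivalence, observe first that the case in Theorem~\ref{thm:classification}\textup{(a)} does not occur, since $\A$ and $\B$ are both unital. Hence Theorem~\ref{thm:classification}\textup{(b)} applies with $j_\A=1$. For the forward direction, suppose every continuous spectrum-shrinking map is spectrum-preserving, and let $(\ell_j^k)$ be any nonnegative integer solution of~\eqref{eq:rank-equations}. By the polynomial realization in~\textup{(b)}, there is a polynomial spectrum-shrinking map $\phi$ whose target factors are given by~\eqref{eq:multiplicities} with exactly these exponents. The exponents are uniquely determined by $\phi$, so the criterion in~\textup{(c)} is governed by this same array. Since $\phi$ is spectrum-preserving by assumption, \textup{(c)} forces~\eqref{eq:coverage}. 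For the reverse direction, any continuous spectrum-shrinking map $\phi$ has exponents solving~\eqref{eq:rank-equations} by~\textup{(b)}. By hypothesis these exponents satisfy~\eqref{eq:coverage}, and then~\textup{(c)} shows that $\phi$ is spectrum-preserving. The additional condition on $\ell_0^k$ in~\textup{(c)} is vacuous because $\A$ is unital.

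For the statement about \cite{GT2025}, I would use Lemma~\ref{lem:associative} to translate the data. It identifies $\widehat\A$ with $(\A/\rad(\A))^+$, with simple summands $M_{n_j}(\C)^+$ corresponding bijectively to the maximal ideals $\mathcal M_j$, where $\A/\mathcal M_j\cong M_{n_j}(\C)$. The same holds for $\B$ with sizes $m_k$. Thus the Jordan ranks $n_j$ and $m_k$ are precisely the sizes of the simple matrix quotients used in \cite{GT2025}, and~\eqref{eq:rank-equations} is literally their Diophantine system. Likewise,~\eqref{eq:coverage} is their condition that each simple source quotient receives positive multiplicity in some target quotient. The converse left open in \cite{GT2025} says that if every solution has this positivity property, then every continuous spectrum-shrinking map preserves spectra. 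This is now the reverse direction just proved, with no structural-matrix-algebra hypothesis on $\A$.

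The only step requiring care is the bookkeeping in this identification. One must check that the spectra in \cite{GT2025}, which use the usual associative spectrum~\eqref{eq:spc_assoc}, agree with ours; this is shown in Subsection~\ref{subsec:calculus}. One must also check that the indexing of quotients matches. Since Lemma~\ref{lem:associative} gives a bijection preserving sizes, and the conditions~\eqref{eq:rank-equations} and~\eqref{eq:coverage} are invariant under relabeling, this is routine.
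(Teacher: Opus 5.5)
Your proposal is correct and follows the paper's proof. You read the equivalence off Theorem~\ref{thm:classification}\textup{(b)} and~\textup{(c)}, using the polynomial realization and the uniqueness of the exponents as in Corollary~\ref{cor:automatic}, and you use Lemma~\ref{lem:associative} to identify the Jordan ranks with the sizes of the simple matrix quotients, so that \eqref{eq:rank-equations} and \eqref{eq:coverage} become exactly the conditions of \cite[Theorem~1.1]{GT2025}; your check that the spectra agree with the associative spectrum, via Subsection~\ref{subsec:calculus}, is something the paper uses only implicitly.
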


\begin{proof}
By Lemma~\ref{lem:associative}, the integers $n_j$, $1\le j\le p$, and $m_k$, $1\le k\le q$, are the sizes of the simple matrix quotients of $\A$ and $\B$, respectively, or equivalently, the square roots of the quotient dimensions. Thus~\eqref{eq:rank-equations} and~\eqref{eq:coverage} are precisely the conditions of~\cite[Theorem~1.1]{GT2025}. The asserted equivalence now follows from Theorem~\ref{thm:classification}\textup{(b)} and~\textup{(c)}.
\end{proof}

For a unital associative algebra $\A$, Lemma~\ref{lem:associative} identifies the rank-one summands of $\widehat\A$ with the ideals of $\A$ of codimension one. For $1\le j\le p$ with $n_j=1$, the coordinate map $\pi_{\A,j}:\A\to\C$ is the quotient homomorphism $\A\to\A/\ker\pi_{\A,j}\cong\C$. Thus Corollary~\ref{cor:selectors} recovers~\cite[Corollary~2.2]{GT2025}: a continuous spectral selector exists if and only if $\A$ has an ideal of codimension one. It also shows that every continuous spectral selector is one of these quotient homomorphisms.

\subsection{Examples}\label{subsec:examples}

We conclude with examples illustrating the roles of symmetrization and continuity in Corollary~\ref{cor:selectors}: a continuous spectral selector need not be multiplicative for the original product, and a discontinuous spectral selector need not be additive. We also exhibit a continuous spectrum-preserving map whose dependence on the nilradical coordinate is not polynomial.

\begin{example}\label{ex:symmetrization}
Let $\mathcal T$ be the three-dimensional algebra of Kosier~\cite[p.~306]{Kosier1962}, with basis $\{1,u,v\}\subseteq\mathcal T$, identity $1$, and remaining products of basis elements
\[
u^2:=v^2:=0, \qquad uv:=1 \qquad \text{and} \qquad vu:=-1.
\]
For $w\in\C u\oplus\C v$ we have $w^2=0$, so $\C 1+\C w$ is a homomorphic image of $\C[t]/(t^2)$ and hence associative. Since every element of $\mathcal T$ lies in such a subalgebra, $\mathcal T$ is power-associative. Note also that $\mathcal T$ fails the flexible identity $(xy)x=x(yx)$, $x,y\in\mathcal T$, since $(uv)u=u\ne-u=u(vu)$.

The algebra $\mathcal T$ is simple. Indeed, let $\mathcal I\subseteq\mathcal T$ be a nonzero ideal and $x:=\alpha1+\beta u+\gamma v\in\mathcal I\setminus\{0\}$ with $\alpha,\beta,\gamma\in\C$. Direct multiplication gives $xu-ux=-2\gamma 1$ and $xv-vx=2\beta 1$. If $\beta\ne0$ or $\gamma\ne0$, then $1\in\mathcal I$; otherwise $x=\alpha1$ with $\alpha\ne0$, and again $1\in\mathcal I$. Hence $\mathcal I=\mathcal T$.

In $\mathcal T^+$, however, $\C u\oplus\C v$ is an ideal with zero product, and the quotient is $\C$. Every nil ideal of $\mathcal T^+$ has zero image in this quotient, since $\C$ has no nonzero nilpotent elements. Therefore $\Nil(\mathcal T^+)=\C u\oplus\C v$ and $\widehat{\mathcal T}=\C$. By Corollary~\ref{cor:selectors}, the unique continuous spectral selector on $\mathcal T$ is
\[
 f:\mathcal T\to\C,\qquad f(\alpha1+\beta u+\gamma v):=\alpha,\qquad\alpha,\beta,\gamma\in\C.
\]
It is a character of $\mathcal T^+$ but is not multiplicative on $\mathcal T$, since $f(uv)=1$ and $f(u)f(v)=0$.
\end{example}

\begin{example}
Let $f:M_2(\C)\to\C$ assign to each matrix the eigenvalue with largest real part and, among these, largest imaginary part. Then $f$ is a spectral selector. Put $A:=\diag(i,-i)\in M_2(\C)$ and, for $k\in\N$, set $A_k:=\diag(-1/k+i,-i)\in M_2(\C)$. Then $\lim_{k \to \infty} A_k=A$, but $f(A_k)=-i$ for every $k\in\N$, whereas $f(A)=i$. Thus $f$ is discontinuous at $A$. It is also not additive, since $f(\diag(1,0))+f(\diag(0,1))=2$, whereas $f(I_2)=1$.
\end{example}

\begin{example}\label{ex:nil-dependence}
Let $\mathcal D:=\C[t]/(t^2)$, and let $z:=t+(t^2)\in\mathcal D$. Define a map $\phi:\mathcal D\to M_2(\C)$ by
\[
 \phi(\alpha1+\beta z):=\begin{bmatrix}\alpha&\overline\beta\\0&\alpha\end{bmatrix},\qquad \alpha,\beta\in\C,
\]
where the bar denotes complex conjugation. The map $\phi$ is continuous, and both $\spc_{\mathcal D}(\alpha1+\beta z)$ and $\spc_{M_2(\C)}(\phi(\alpha1+\beta z))$ equal $\{\alpha\}$, so $\phi$ is spectrum-preserving. Since $\Nil(\mathcal D^+)=\C z$ and $\phi$ depends on $\beta$, it does not factor through the quotient map $\mathcal D\to\widehat{\mathcal D}$. Moreover, $\phi$ is not polynomial. Otherwise, restricting its upper-right entry to $\alpha=0$ would give a polynomial $g\in\C[t]$ with $g(\beta)=\overline\beta$ for every $\beta\in\C$. This polynomial agrees with $t$ at all positive integers, so $g=t$, whereas $\overline i=-i\ne i$.
\end{example}

\subsection*{Funding}
This research was supported by the European Union -- NextGenerationEU through the National Recovery and Resilience Plan 2021--2026, Institutional Grants of the University of Zagreb Faculty of Science (IK IA 1.1.3. Impact4Math, PMF-CROFUND).

\subsection*{AI use disclosure}
The author used OpenAI's GPT-5.6 and subsequently GPT-6.1 Sol for preliminary exploration of proof strategies, literature searches, suggestions on proof simplification and exposition, copyediting and proofreading. In particular, GPT-6.1 Sol suggested replacing the companion-matrix argument in Step~1 of the proof of Proposition~\ref{prop:sections} with a Hessenberg--Toeplitz construction. It also suggested using Rouch\'e's theorem in place of the complex implicit function theorem in Step~2 of the proof of Lemma~\ref{lem:multiplicities}. Both changes make the arguments more direct. Anthropic's Claude Opus 5.5 was also used for literature searches and proofreading. The author reviewed and verified all AI-assisted material and takes full responsibility for the final content.


\begin{thebibliography}{99}

\bibitem{Albert1948}
A.~A.~Albert,
\emph{Power-associative rings},
Trans. Amer. Math. Soc. 64 (1948), no.~3, 552--593,
doi: \href{https://doi.org/10.1090/S0002-9947-1948-0027750-7}{10.1090/S0002-9947-1948-0027750-7}.

\bibitem{Albert1950}
A.~A.~Albert,
\emph{A theory of power-associative commutative algebras},
Trans. Amer. Math. Soc. 69 (1950), no.~3, 503--527,
doi: \href{https://doi.org/10.1090/S0002-9947-1950-0038959-X}{10.1090/S0002-9947-1950-0038959-X}.

\bibitem{Aupetit2000}
B.~Aupetit,
\emph{Spectrum-preserving linear mappings between Banach algebras or Jordan--Banach algebras},
J. London Math. Soc. (2) 62 (2000), no.~3, 917--924,
doi: \href{https://doi.org/10.1112/S0024610700001514}{10.1112/S0024610700001514}.

\bibitem{BaribeauRansford2000}
L.~Baribeau and T.~Ransford,
\emph{Non-linear spectrum-preserving maps},
Bull. London Math. Soc. 32 (2000), no.~1, 8--14,
doi: \href{https://doi.org/10.1112/S0024609399006426}{10.1112/S0024609399006426}.

\bibitem{Bresar2025}
M.~Bre\v{s}ar,
\emph{Introduction to noncommutative algebra},
2nd ed., Universitext, Springer, Cham, 2025,
doi: \href{https://doi.org/10.1007/978-3-031-96296-7}{10.1007/978-3-031-96296-7}.

\bibitem{CLNT2017}
M.-T.~Chien, J.~Liu, H.~Nakazato and T.-Y.~Tam,
\emph{Toeplitz matrices are unitarily similar to symmetric matrices},
Linear Multilinear Algebra 65 (2017), no.~10, 2131--2144,
doi: \href{https://doi.org/10.1080/03081087.2017.1330865}{10.1080/03081087.2017.1330865}.

\bibitem{Chirvasitu2025a}
A.~Chirvasitu,
\emph{Eigenvalue selectors for representations of compact connected groups},
preprint (2025), arXiv:2502.08847,
doi: \href{https://doi.org/10.48550/arXiv.2502.08847}{10.48550/arXiv.2502.08847}.

\bibitem{Chirvasitu2025b}
A.~Chirvasitu,
\emph{Spectral selections, commutativity preservation and Coxeter--Lipschitz maps},
preprint (2025), arXiv:2505.19393,
doi: \href{https://doi.org/10.48550/arXiv.2505.19393}{10.48550/arXiv.2505.19393}.

\bibitem{Chirvasitu2026a}
A.~Chirvasitu,
\emph{The combinatorics of permuting and preserving curve-bound spectra},
preprint (2026), arXiv:2601.01208,
doi: \href{https://doi.org/10.48550/arXiv.2601.01208}{10.48550/arXiv.2601.01208}.

\bibitem{Chirvasitu2026b}
A.~Chirvasitu,
\emph{Eigenvalue collision and exotic preservers on semisimple operators},
preprint (2026), arXiv:2604.07062,
doi: \href{https://doi.org/10.48550/arXiv.2604.07062}{10.48550/arXiv.2604.07062}.

\bibitem{CGT2025}
A.~Chirvasitu, I.~Gogi\'{c} and M.~Toma\v{s}evi\'{c},
\emph{A variant of \v{S}emrl's preserver theorem for singular matrices},
Linear Algebra Appl. 724 (2025), 298--319,
doi: \href{https://doi.org/10.1016/j.laa.2025.06.021}{10.1016/j.laa.2025.06.021}.

\bibitem{CGT2026}
A.~Chirvasitu, I.~Gogi\'{c} and M.~Toma\v{s}evi\'{c},
\emph{Continuous spectrum-shrinking maps and applications to preserver problems},
Proc. Edinb. Math. Soc. 69 (2026), no.~4, 1340--1371,
doi: \href{https://doi.org/10.1017/S0013091526101473}{10.1017/S0013091526101473}.

\bibitem{Costara2011}
C.~Costara,
\emph{Maps on matrices that preserve the spectrum},
Linear Algebra Appl. 435 (2011), no.~11, 2674--2680,
doi: \href{https://doi.org/10.1016/j.laa.2011.04.026}{10.1016/j.laa.2011.04.026}.

\bibitem{CuiHou2004}
J.~L.~Cui and J.~C.~Hou,
\emph{A characterization of homomorphisms between Banach algebras},
Acta Math. Sin. (Engl. Ser.) 20 (2004), no.~4, 761--768,
doi: \href{https://doi.org/10.1007/s10114-004-0312-8}{10.1007/s10114-004-0312-8}.

\bibitem{DHKQ2013}
G.~Dolinar, J.~Hou, B.~Kuzma and X.~Qi,
\emph{Spectrum nonincreasing maps on matrices},
Linear Algebra Appl. 438 (2013), no.~8, 3504--3510,
doi: \href{https://doi.org/10.1016/j.laa.2012.12.034}{10.1016/j.laa.2012.12.034}.

\bibitem{EPV2024}
G.~M.~Escolano, A.~M.~Peralta and A.~R.~Villena,
\emph{Lie--Trotter formulae in Jordan--Banach algebras with applications to the study of spectral-valued multiplicative functionals},
Results Math. 79 (2024), no.~1, article 17,
doi: \href{https://doi.org/10.1007/s00025-023-02043-w}{10.1007/s00025-023-02043-w}.

\bibitem{Fiedler1990}
M.~Fiedler,
\emph{Expressing a polynomial as the characteristic polynomial of a symmetric matrix},
Linear Algebra Appl. 141 (1990), 265--270,
doi: \href{https://doi.org/10.1016/0024-3795(90)90323-5}{10.1016/0024-3795(90)90323-5}.

\bibitem{Frobenius1897}
G.~Frobenius,
\emph{\"Uber die Darstellung der endlichen Gruppen durch lineare Substitutionen},
Sitzungsber. Preuss. Akad. Wiss. Berlin (1897), 994--1015,
digital facsimile doi: \href{https://doi.org/10.3931/e-rara-18879}{10.3931/e-rara-18879}.

\bibitem{Gleason1967}
A.~M.~Gleason,
\emph{A characterization of maximal ideals},
J. Analyse Math. 19 (1967), 171--172,
doi: \href{https://doi.org/10.1007/BF02788714}{10.1007/BF02788714}.

\bibitem{GT2025}
I.~Gogi\'{c} and M.~Toma\v{s}evi\'{c},
\emph{Continuous spectrum-shrinking maps between finite-dimensional algebras},
J. Algebra Appl., published online (2025), article 2650298,
doi: \href{https://doi.org/10.1142/S0219498826502981}{10.1142/S0219498826502981}.

\bibitem{Goldberg2007}
M.~Goldberg,
\emph{Minimal polynomials and radii of elements in finite-dimensional power-associative algebras},
Trans. Amer. Math. Soc. 359 (2007), no.~8, 4055--4072,
doi: \href{https://doi.org/10.1090/S0002-9947-06-04296-6}{10.1090/S0002-9947-06-04296-6}.

\bibitem{Jacobson1963}
N.~Jacobson,
\emph{Generic norm of an algebra},
Osaka Math. J. 15 (1963), no.~1, 25--50,
doi: \href{https://doi.org/10.18910/8311}{10.18910/8311}.

\bibitem{Jacobson1968}
N.~Jacobson,
\emph{Structure and representations of Jordan algebras},
American Mathematical Society Colloquium Publications, Vol.~39,
American Mathematical Society, Providence, RI, 1968,
doi: \href{https://doi.org/10.1090/coll/039}{10.1090/coll/039}.

\bibitem{KahaneZelazko1968}
J.-P.~Kahane and W.~\.{Z}elazko,
\emph{A characterization of maximal ideals in commutative Banach algebras},
Studia Math. 29 (1968), no.~3, 339--343,
doi: \href{https://doi.org/10.4064/sm-29-3-339-343}{10.4064/sm-29-3-339-343}.

\bibitem{Kaplansky1970}
I.~Kaplansky,
\emph{Algebraic and analytic aspects of operator algebras},
CBMS Regional Conference Series in Mathematics, Vol.~1,
American Mathematical Society, Providence, RI, 1970,
doi: \href{https://doi.org/10.1090/cbms/001}{10.1090/cbms/001}.

\bibitem{Kokoris1956}
L.~A.~Kokoris,
\emph{Simple power-associative algebras of degree two},
Ann. of Math. (2) 64 (1956), no.~3, 544--550,
doi: \href{https://doi.org/10.2307/1969601}{10.2307/1969601}.

\bibitem{Kosier1962}
F.~Kosier,
\emph{On a class of nonflexible algebras},
Trans. Amer. Math. Soc. 102 (1962), no.~2, 299--318,
doi: \href{https://doi.org/10.1090/S0002-9947-1962-0133353-3}{10.1090/S0002-9947-1962-0133353-3}.

\bibitem{KostantRallis1971}
B.~Kostant and S.~Rallis,
\emph{Orbits and representations associated with symmetric spaces},
Amer. J. Math. 93 (1971), no.~3, 753--809,
doi: \href{https://doi.org/10.2307/2373470}{10.2307/2373470}.

\bibitem{KowalskiSlodkowski1980}
S.~Kowalski and Z.~S\l{}odkowski,
\emph{A characterization of multiplicative linear functionals in Banach algebras},
Studia Math. 67 (1980), no.~3, 215--223,
doi: \href{https://doi.org/10.4064/sm-67-3-215-223}{10.4064/sm-67-3-215-223}.

\bibitem{MMP1999}
D.~S.~Mackey, N.~Mackey and S.~Petrovi\'c,
\emph{Is every matrix similar to a Toeplitz matrix?},
Linear Algebra Appl. 297 (1999), no.~1--3, 87--105,
doi: \href{https://doi.org/10.1016/S0024-3795(99)00131-7}{10.1016/S0024-3795(99)00131-7}.

\bibitem{MarcosVelasco2014}
J.~C.~Marcos and M.~V.~Velasco,
\emph{The multiplicative spectrum and the uniqueness of the complete norm topology},
Filomat 28 (2014), no.~3, 473--485,
doi: \href{https://doi.org/10.2298/FIL1403473M}{10.2298/FIL1403473M}.

\bibitem{MarcusPurves1959}
M.~Marcus and R.~Purves,
\emph{Linear transformations on algebras of matrices: the invariance of the elementary symmetric functions},
Canad. J. Math. 11 (1959), 383--396,
doi: \href{https://doi.org/10.4153/CJM-1959-039-4}{10.4153/CJM-1959-039-4}.

\bibitem{MathieuSchulz2023}
M.~Mathieu and F.~Schulz,
\emph{Invertibility preserving mappings onto finite $C^*$-algebras},
Studia Math. 272 (2023), no.~1, 113--120,
doi: \href{https://doi.org/10.4064/sm230101-27-3}{10.4064/sm230101-27-3}.

\bibitem{McCrimmon2004}
K.~McCrimmon,
\emph{A taste of Jordan algebras},
Universitext, Springer, New York, 2004,
doi: \href{https://doi.org/10.1007/b97489}{10.1007/b97489}.

\bibitem{Schafer1966}
R.~D.~Schafer,
\emph{An introduction to nonassociative algebras},
Pure and Applied Mathematics, Vol.~22,
Academic Press, New York--London, 1966,
doi: \href{https://doi.org/10.1016/S0079-8169(08)X6123-9}{10.1016/S0079-8169(08)X6123-9}.

\bibitem{Semrl2008}
P.~\v{S}emrl,
\emph{Characterizing Jordan automorphisms of matrix algebras through preserving properties},
Oper. Matrices 2 (2008), no.~1, 125--136,
doi: \href{https://doi.org/10.7153/oam-02-07}{10.7153/oam-02-07}.

\bibitem{Sourour1996}
A.~R.~Sourour,
\emph{Invertibility preserving linear maps on $\mathcal L(X)$},
Trans. Amer. Math. Soc. 348 (1996), no.~1, 13--30,
doi: \href{https://doi.org/10.1090/S0002-9947-96-01428-6}{10.1090/S0002-9947-96-01428-6}.

\bibitem{TSB2017}
C.~Tour\'e, F.~Schulz and R.~Brits,
\emph{Multiplicative maps into the spectrum},
Studia Math. 239 (2017), no.~1, 55--66,
doi: \href{https://doi.org/10.4064/sm8705-1-2017}{10.4064/sm8705-1-2017}.

\bibitem{Zelazko1968}
W.~\.{Z}elazko,
\emph{A characterization of multiplicative linear functionals in complex Banach algebras},
Studia Math. 30 (1968), no.~1, 83--85,
doi: \href{https://doi.org/10.4064/sm-30-1-83-85}{10.4064/sm-30-1-83-85}.

\end{thebibliography}
\end{document}